\numberwithin{equation}{section}
\theoremstyle{plain}
\newtheorem{theorem}{Theorem}[section]
\newtheorem{lemma}[theorem]{Lemma}
\theoremstyle{definition}
\newtheorem{remark}[theorem]{Remark}
\title{\textbf{Nontrivial solution for Klein-Gordon equation coupled with Born-Infeld theory with critical growth}}
\author{, Lin Li and }
\author{Chuan-Min He, \ Lin Li\footnote{{\tt Corresponding author.}
		E-mail address: {\tt linli@ctbu.edu.cn \& lilin420@gmail.com} (L. Li).},\ \  and\ Shang-Jie Chen\footnote{Lin Li is supported by Research Fund of National Natural Science Foundation of China (No. 11861046), Chongqing Municipal Education Commission (No. KJQN20190081), Chongqing Technology and Business University(No. CTBUZDPTTD201909).}\\
	\footnotesize
	School of Mathematics and Statistics \& Chongqing Key Laboratory of Economic and Social Application Statistics,\\ \footnotesize Chongqing Technology and Business University,\\
	\footnotesize Chongqing 400067, China}
\date{}
\begin{document}
\maketitle

\begin{abstract}
\noindent   In this paper, we study the following system
\begin{eqnarray*}
	\left\{   \begin{array}{ll}
	-\Delta u + V(x)u-(2\omega+\phi)\phi u=\lambda f(u)+|u|^{4}u,   \ & \text{in} \  \mathbb{R}^{3}, \\
	\Delta \phi + \beta\Delta_4\phi  = 4\pi(\omega+\phi) u^{2},   \ & \text{in}\  \mathbb{R}^{3},\\
\end{array}
\right.	
\end{eqnarray*}
where $f(u)$ without any growth and Ambrosetti-Rabinowitz conditions. We use cut-off function and Moser iteration to obtain the existence of nontrivial solution. Finally, as a by-product of our approaches, we get the same result for Klein-Gordon-Maxwell system.
\end{abstract}

\noindent {\bf Key words:}  Klein-Gordon equation$\cdot$ Born-Infeld theory$\cdot$ Moser iteration $\cdot$ Mountain pass theorem

\section{Introduction}
This paper studies the Klein-Gordon equation coupled with Born-Infeld theory with critical growth
\begin{eqnarray}\label{problem 1}
	\left\{   \begin{array}{ll}
	-\Delta u + V(x)u-(2\omega+\phi)\phi u=\lambda f(u)+|u|^{4}u,   \ & \text{in} \  \mathbb{R}^{3}, \\
	\Delta \phi + \beta\Delta_4\phi  = 4\pi(\omega+\phi) u^{2},   \ & \text{in}\  \mathbb{R}^{3},\\
\end{array}
\right.	
\end{eqnarray}
where $\omega>0$ is a constant, $\lambda>0$ is a positive parameter. Klein-Gordon equation can be used to develop the theory of electrically charged fields (see\cite{1997Geometry}) and study the interaction with an assigned electromagnetic field (see \cite{MR0489509}). The Born-Infeld (BI) electromagnetic theory \cite{born1934quantum,born1934foundations} was originally proposed as a nonlinear correction of the Maxwell theory in order to overcome the problem of infiniteness in the classical electrodynamics of point particles(see \cite{MR1936545}). Klein-Gordon equation coupled with Born-Infeld theory system has attracted many theoretic physicists. For more physical applications, please refer to reference \cite{2004Coupled,2010Solitary} and the references therein.

In the past decades, many people have studied this system through using variational methods, and have also obtained existence of nontrivial solutions under different assumptions. Let us recall some previous results which give an inspiration to the presence research.

The first result is due to d'Avenia and Pisani, in which the existence of infinitely many radially symmetric solution for the following form
\begin{eqnarray}\label{problem 2}
	\left\{   \begin{array}{ll}
	-\Delta u + [m^{2}-(\omega+\phi)^{2}] u=|u|^{p-2}u,   \ & \text{in} \  \mathbb{R}^{3}, \\
	\Delta \phi + \beta\Delta_4\phi  = 4\pi(\omega+\phi) u^{2},   \ & \text{in}\  \mathbb{R}^{3},\\
\end{array}
\right.
\end{eqnarray}
was proved when $4<p<6$ and $|\omega|<|m_{0}|$ in \cite{2002Nonlinear}. Mugnai \cite{2004Coupled} get the same result when $2<p\le 4$ and $0<\omega<\sqrt{\frac{1}{2}p-1}|m|$. Afterwards, Wang \cite{2012Solitary} use Poho\v{z}aev identity to improve literature \cite{2002Nonlinear}, \cite{2004Coupled} and obtains the solitary wave solution when the one of the following conditions is satisfied
\begin{enumerate}[(i)]
    \item $3<p<6$ and $m>\omega>0$,
    \item $2<p\le 3$ and $(p-2)(4-p)m^{2}>\omega^{2}>0$.
\end{enumerate}
Yu \cite{2010Solitary} get the existence of the least-action solitary wave in both bounded smooth case and
$\mathbb{R}^{3}$ case. Moreover, replacing $|u|^{p-2}u$ by $|u|^{p-2}u+h(x)$, Chen and Li in \cite{2013Multiple} get the existence of multiple solution if one of the following condition holds
\begin{enumerate}[(i)]
    \item $4<p<6$ and $|m|>\omega$,
    \item $2<p\le 4$ and $\sqrt{\frac{1}{2}p-1}|m|>\omega$.
\end{enumerate}
Later, Chen and Song \cite{2017The} studied the following Klein-Gordon equation with concave and convex nonlinearities coupled with Born-Infeld theory
 \begin{eqnarray}\label{problem }
	\left\{   \begin{array}{ll}
	-\Delta u + V(x)u-(2\omega+\phi)\phi u=\lambda k(x)|u|^{q-2}u+g(x)|u|^{p-2}u,   \ & \text{in} \  \mathbb{R}^{3}, \\
	\Delta \phi + \beta\Delta_4\phi  = 4\pi(\omega+\phi) u^{2},   \ & \text{in}\  \mathbb{R}^{3}.\\
\end{array}
\right.	
\end{eqnarray}
Under some appropriate assumptions on $V(x),\ \lambda,\ k(x)$ and $g(x)$, the obtain the existence of multiple nontrivial solutions when $1<q<2<p<6$. Recently, for general potential $V(x)$ and $|u|^{p-2}u$ by a continuous nonlinearity $f(x,u)$ with polynomial growth, Wen and Tang \cite{MR4001213} obtained infinitely many solutions and least energy solutions, Che and Chen \cite{MR4125947} use genus theory to obtain nontrivial solutions.

We know that a large number of predecessors have studied the problem of subcritical growth like the above papers. Furthermore, when the nonlinearity term is accompanied by critical growth, it is one of the most dramatic cases of loss of compactness. To my best knowledge, there is only one work about the Klein-Gordon-Born-Infeld system with critical growth. Teng and Zhang \cite{2011Exist}
investigated the following system
\begin{eqnarray}\label{}
	\left\{   \begin{array}{ll}
	-\Delta u + [m^{2}-(\omega+\phi)^{2}] u=|u|^{p-2}u+|u|^{2^{*}-2}u,   \ & \text{in} \  \mathbb{R}^{3}, \\
	\Delta \phi + \beta\Delta_4\phi  = 4\pi(\omega+\phi) u^{2},   \ & \text{in}\  \mathbb{R}^{3}.\\
\end{array}
\right.
\end{eqnarray}
They obtained it has at least a nontrivial solution when $4<p<6$ and $m<\omega.$

Motivated by the aforementioned works, in this paper, we will use some new tricks to generalize the above results to problem \eqref{problem 1} under the following conditions:
\begin{enumerate}[($V_{1}$)]
    \item  $V\in C^{1}(\mathbb{R}^{3},\mathbb{R})$ and there is a $V_{0}>0$ such that $V(x)\ge V_{0}$ for all $x\in\mathbb{R}^{3}.$
\end{enumerate}
\begin{enumerate}[($V_{2}$)]
    \item  $V(x)\to\infty$ as $|x|\to\infty.$
\end{enumerate}
\begin{enumerate}[($f_{1}$)]
	\item $f\in C(\mathbb{R})$, $\lim_{u\to 0}\frac{f(u)}{u}=0.$
\end{enumerate}
\begin{enumerate}[($f_{2}$)]
	\item $\lim_{|u|\to\infty}\frac{f(u)}{u}=+\infty.$
\end{enumerate}

Our first result is as follows.
\begin{theorem}\label{theorem 1.1}
    Assume that $(V_{1})-(V_{2})$ and $(f_{1})-(f_{2})$ hold. then there exists a constant $\lambda_{1}^*\ge 0$ such that, for any $\lambda\in (0,\lambda_{1}^*)$, system \eqref{problem 1} has a nontrivial solution.
\end{theorem}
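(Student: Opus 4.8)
\section*{Proof proposal}

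The plan is to reduce \eqref{problem 1} to a single equation, truncate $f$ so that the associated functional is well defined, produce a critical point of the truncated functional by a mountain pass argument of Brezis--Nirenberg type, and then run a Moser iteration to show that for small $\lambda$ this critical point actually solves the original system. I would work in $E=\{u\in H^1(\mathbb{R}^3):\int_{\mathbb{R}^3}V(x)u^2\,dx<\infty\}$ with $\|u\|^2=\int_{\mathbb{R}^3}(|\nabla u|^2+V(x)u^2)\,dx$; by $(V_1)$ this is a Hilbert space, and by $(V_2)$ the embedding $E\hookrightarrow L^p(\mathbb{R}^3)$ is compact for $2\le p<6$ while remaining only continuous for $p=6$. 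As in the Klein--Gordon--Born--Infeld literature, for each $u\in E$ the second equation has a unique solution $\phi_u$ with $-\omega\le\phi_u\le 0$ on $\{u\neq0\}$, the map $u\mapsto\phi_u$ is $C^1$ and weakly continuous, and the Born--Infeld parameter $\beta$ enters only through the definition of $\phi_u$. Substituting $\phi_u$ reduces the problem to finding critical points of
\begin{equation*}
I_\lambda(u)=\frac12\|u\|^2-\frac{\omega}{2}\int_{\mathbb{R}^3}\phi_u u^2\,dx-\lambda\int_{\mathbb{R}^3}F(u)\,dx-\frac16\int_{\mathbb{R}^3}|u|^{6}\,dx,\qquad F(t)=\int_0^t f(s)\,ds.
\end{equation*}

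The first difficulty is that, with no growth bound on $f$, the integral $\int F(u)$ need not be finite on $E$, so $I_\lambda$ is a priori meaningless. I would therefore fix a level $T>0$ and replace $f$ by a truncation $f_T$ agreeing with $f$ on $[-T,T]$ and continued with at most linear growth outside, so that $|f_T(t)|\le C_T(1+|t|)$ and the truncated functional $I_\lambda^T$ (with $F_T$ in place of $F$) is well defined and $C^1$ on $E$. Using $(f_1)$ one has $F_T(t)=o(t^2)$ near $0$, so together with $\phi_u\le 0$ the functional dominates $\tfrac12\|u\|^2-o(\|u\|^2)-C\|u\|^6$ on small spheres, giving a positive barrier, while the critical term forces $I_\lambda^T(tu)\to-\infty$ as $t\to\infty$. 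Thus $I_\lambda^T$ has the mountain pass geometry with a level $c_\lambda>0$.

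The heart of the argument, and the step I expect to be hardest, is recovering compactness in the presence of the critical exponent. Since the truncated nonlinearity satisfies no Ambrosetti--Rabinowitz condition, I would first recover boundedness of Palais--Smale (or Cerami) sequences by testing the combination $I_\lambda^T(u_n)-\tfrac16\langle (I_\lambda^T)'(u_n),u_n\rangle$: the $|u|^6$ contributions cancel, leaving $\tfrac13\|u_n\|^2$ against a remainder carrying the factor $\lambda$, so for $\lambda$ small the sequence is bounded. Because $E\hookrightarrow L^6$ is not compact, a bounded sequence may still lose mass at the critical scale; the coercive potential handles the subcritical pieces (the truncated term and the nonlocal term $\int\phi_u u^2$ are weakly continuous through the compact embedding), so the only obstruction is the $|u|^6$ term, and a concentration analysis shows the condition holds at every level below $\tfrac13 S^{3/2}$, where $S$ is the best constant in $\|\nabla u\|_2^2\ge S\|u\|_6^2$. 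It then remains to verify $c_\lambda<\tfrac13 S^{3/2}$; in dimension three this is delicate, and I would test the mountain pass path with truncated Aubin--Talenti instantons. This yields a nontrivial critical point $u_\lambda$ of $I_\lambda^T$ together with a uniform bound on $\|u_\lambda\|$ coming from $c_\lambda<\tfrac13 S^{3/2}$.

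Finally, $u_\lambda$ solves the truncated equation $-\Delta u+V(x)u-(2\omega+\phi_u)\phi_u u=\lambda f_T(u)+|u|^4u$. A Brezis--Kato bootstrap followed by a Moser iteration gives $u_\lambda\in L^\infty(\mathbb{R}^3)$ with a bound on $\|u_\lambda\|_\infty$ depending only on $\lambda$ and on the uniformly controlled energy. Fixing $T$ large and then restricting $\lambda$ to a small interval $(0,\lambda_1^*)$, this bound stays below $T$, so $|u_\lambda(x)|\le T$ everywhere and hence $f_T(u_\lambda)=f(u_\lambda)$; thus $u_\lambda$ solves \eqref{problem 1}, and it is nontrivial because $I_\lambda^T(u_\lambda)=c_\lambda>0$. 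Taking $\lambda_1^*$ to be the supremum of the admissible values of $\lambda$ completes the proof.
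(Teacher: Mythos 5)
Your overall architecture (reduction via $\phi_u$, truncation of $f$, mountain pass below $\tfrac13 S^{3/2}$, Moser iteration, then fixing $T$ large and $\lambda$ small) matches the paper's, but there is a genuine gap at the single most delicate step, and it is caused by your choice of truncation. You continue $f$ outside $[-T,T]$ with \emph{at most linear} growth, $|f_T(t)|\le C_T(1+|t|)$. With that truncation the level estimate $c_\lambda<\tfrac13S^{3/2}$ --- which you correctly flag as delicate --- cannot be obtained by testing with Aubin--Talenti instantons, and in fact should be expected to fail. In dimension three all the relevant instanton quantities have the same order: $\int|\nabla u_\varepsilon|^2=S^{3/2}+O(\varepsilon)$, $\int|u_\varepsilon|^6=S^{3/2}+O(\varepsilon^3)$, $|u_\varepsilon|_2^2=O(\varepsilon)$, $|u_\varepsilon|_4^4=O(\varepsilon)$. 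The gradient error, the potential term and the nonlocal term all contribute $+O(\varepsilon)$ with \emph{fixed} constants, while your truncated nonlinearity can contribute at most $\lambda C_T t_\varepsilon^2|u_\varepsilon|_2^2=\lambda C_T\,O(\varepsilon)$ in the favorable direction (and since $f$ may change sign, even that favorable sign is not guaranteed). Because your regime is $\lambda$ \emph{small}, this term cannot dominate the positive $O(\varepsilon)$ errors, so one only gets $\sup_{t\ge0}I^T_\lambda(tu_\varepsilon)\ge\tfrac13S^{3/2}$ and the mountain-pass level never drops below the compactness threshold. This is not a mere technicality: the limiting model $-\Delta u+V_0u=u^5$ in $\mathbb{R}^3$ has no nontrivial solution by Poho\v{z}aev's identity, so a linear-growth perturbation with small coefficient genuinely cannot produce one.

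The paper avoids exactly this trap by truncating differently: $h_T(t)=f(t)$ on $(0,T]$, $h_T(t)=C_Tt^{p-1}$ for $t>T$ with $4<p<6$, and $h_T(t)=0$ for $t\le0$. This keeps the truncated nonlinearity subcritical (so well-definedness, the $(PS)$ analysis and the Moser iteration all go through, as in your plan) but \emph{4-superlinear}: $H_T(t)/t^4\to+\infty$, hence $H_T(t)\ge Mt^4-C_Mt^2$ for \emph{every} $M>0$. Testing with instantons then yields a negative term $-\lambda Mt_\varepsilon^4|u_\varepsilon|_4^4=-\lambda M\,O(\varepsilon)$ whose coefficient can be made arbitrarily large by choosing $M$ large; this is precisely what beats the fixed $+O(\varepsilon)$ errors and gives $c_{\lambda,T}<\tfrac13S^{3/2}$. (The same 4-superlinearity, through the property $th_T(t)-4H_T(t)\ge-\mu t^2$, is what the paper exploits in the combination $J_{\lambda,T}-\tfrac14\langle J_{\lambda,T}',\cdot\rangle$ to bound $(PS)$ sequences for all $\lambda$; your alternative combination $I-\tfrac16\langle I',\cdot\rangle$ does give boundedness for small $\lambda$, so that part of your argument is fine and is not the issue.) To repair your proof, replace the linear continuation by a continuation of the form $C_Tt^{q-1}$ with $4<q<6$ and rerun the level estimate using $H_T(t)\ge Mt^4-C_Mt^2$ with $M$ large.
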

\begin{remark}
We all know that if the nonlinear term is $|u|^{4}u$, we can use the Poho\v{z}aev identity and the classical variational method to know whether the system has no nontrivial solutions. Therefore, when studying the nonlinear term is a critical growth case, it is usually to add a high energy lower-order perturbation term like \cite{MR2754298,2011Exist,2011Solitary}. By comparison with the above papers, the result of this paper is that $\lambda$ is small enough, that is, the lower-order perturbation is a lower energy perturbation.
\end{remark}
\begin{remark}
    \begin{enumerate}[(i)]
    \item The condition $(V_{2})$ was first introduced by P.H.Rabinowitz in \cite{MR1162728} to overcome the lack of compactness.
    \item It is worth noting that in this paper we did not require any growth conditions and the Ambrosetti-Rabinowitz condition, and the function $f$ can also be sign-changing.
    \item  There are many functions that can satisfy the condition $(f_{1})-(f_{2}),$ the most typical example is $f(t)=|t|^{p-2}t,$ $p>6$. Moreover, our result is valid for general supercritical nonlinearity.
\end{enumerate}
\end{remark}
We emphasize that our result requires no growth conditions. To prove the existence of nontrivial solution, we adapt a similar argument as in \cite{MR3950621,MR4143620}. Here we briefly explain the process. Firstly, we make a suitable cut-off function to replace $f(u)$ in problem \eqref{problem 1}, so we can get a new system. Secondly, we prove the new system have nontrivial solution. Finally we use the Moser iteration to obtain the existence of nontrivial solution to original Klein-Gordon equation coupled with Born-Infeld theory.

In the second part of this paper, It is worthy of our special attention that when $\beta=0$, a small modification to problem \eqref{problem 1} will become a Klein-Gordon-Maxwell system with critical growth, namely:
\begin{eqnarray}\label{KGM}
	\left\{   \begin{array}{ll}
	-\Delta u + V(x)u-(2\omega+\phi)\phi u=\lambda f(u)+|u|^{4}u,   \ & \text{in} \  \mathbb{R}^{3}, \\
	\Delta \phi= (\omega+\phi) u^{2},   \ & \text{in}\  \mathbb{R}^{3},\\
\end{array}
\right.	
\end{eqnarray}
which has been extensively studied by many authors. A pioneer work is due to Cassani \cite{MR2085333} considered the following critical Klein-Gordon-Maxwell system:
\begin{eqnarray*}
	\left\{   \begin{array}{ll}
	-\Delta u + [m^{2}-(\omega+\phi^{2})]u=\lambda |u|^{p-2}u +|u|^{4}u,   \ & \text{in} \  \mathbb{R}^{N}, \\
	\Delta \phi= (\omega+\phi) u^{2},   \ & \text{in}\  \mathbb{R}^{N}.\\
\end{array}
\right.	
\end{eqnarray*}
where $\lambda>0,\ 2<p<6$ and $0<\omega<m$. When $N=3$ he obtained the existence of a radially symmetric solution for any $\lambda>0$ if $p\in(4,6)$ and for $\lambda$ is sufficiently large if $p=4$. Afterwards C.Carriao, L.Cunha and H.Miyagaki \cite{MR2754298} complement the result of \cite{MR2085333} and also extend it in higher dimensions. They obtained the same result provided one of those conditions satisfies
\begin{enumerate}[(i)]
    \item $N=4$ and $N\ge 6$ for $2<p<2^*$ and $|m|>\omega$ if $\lambda>0$;
    \item $N=5$ and either $2<p<\frac{8}{3}$ if $\lambda>0$ or $\frac{8}{3}\le p<2^*$ if $\lambda$ is sufficiently large;
    \item $N=3$ and either $4<p<2^*$ if $\lambda>0$ or $2<p\le 4$ if $\lambda$ is sufficiently large.
\end{enumerate}
Later,When $N=3$ Wang\cite{2011Solitary} improved the result of \cite{MR2754298,MR2085333} to the case when one of the following holds:
\begin{enumerate}[(i)]
    \item $4<p<6$, $0<\omega<m$ and $\lambda>0$;
    \item $3<p\le 4$, $0<\omega<m$ and $\lambda$ is sufficiently large;
    \item $2<p\le 3$, $0<\omega<\sqrt{(p-2)(4-p)}m$ and $\lambda$ is sufficiently large.
\end{enumerate}
In recent paper \cite{MR3895092}, Chen uses some analytical skills and variational method that is different from \cite{2011Solitary} to get the same result of it.
The authors of \cite{MR2754298} have also studied that for problem \eqref{KGM}, $N=3$, $V(x)$ is a periodic function and $f(u)=|u|^{p-2}u$ in \cite{MR2914593}. They use the minimization of the corresponding Euler-Lagrange functional on the Nehari manifold and the Br$\acute{e}$zis and Nirenberg technique to get a positive ground state solution for each $\lambda>0$ if $p\in (4,6)$ and for $\lambda$ sufficiently large if $p\in (2,4]$. Moreover when the potential well is steep, namely
\begin{eqnarray}\label{steep}
	\left\{   \begin{array}{ll}
	-\Delta u + \mu V(x)u-(2\omega+\phi)\phi u=\lambda f(x,u)+|u|^{4}u,   \ & \text{in} \  \mathbb{R}^{3}, \\
	\Delta \phi= (\omega+\phi) u^{2},   \ & \text{in}\  \mathbb{R}^{3}.\\
\end{array}
\right.	
\end{eqnarray}
where $\mu$, $\lambda$ are positive parameters and $\omega>0$, there exist $\hat{\mu_{0}},\ \hat{\lambda_{0}}>0$ such that for $\mu>\hat{\mu_{0}}$ and $\lambda>\hat{\lambda_{0}}$ problem \eqref{steep} admits a nontrivial solution has been proved by Zhang in \cite{MR3671218}. At the same time, he also obtained a nontrivial solution when the potential well may be not steep. Instead of the expression "$\lambda$ sufficiently large" in the above existing works, Tang, Wen and Chen\cite{MR4057065} give a certain range $\lambda\ge\lambda_{0}$ which admites a ground state solution when $V$ is positive and periodic.

Similarly to the method of Theorem \ref{problem 1}, we can also get a nontrivial solution. Compared with the hypothesis of subcritical perturbation in the above article, in this paper the perturbation term $f(u)$ can be not only a subcritical perturbation but also a supercritical perturbation. What's more the restriction on $\lambda$ is no longer sufficiently large or greater than a certain number, we can only require $\lambda\in (0,\lambda_{2}^*)$ where $\lambda_{2}^*\ge0$. Our second result is as follows.
\begin{theorem}\label{corollary 1.2}
    Assume that $(V_{1})-(V_{2})$ and $(f_{1})-(f_{2})$ hold. then there exists a constant $\lambda_{2}^*\ge 0$ such that, for any $\lambda\in (0,\lambda_{2}^*)$, system \eqref{KGM} has a nontrivial solution.
\end{theorem}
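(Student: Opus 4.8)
The plan is to follow the scheme of Theorem~\ref{theorem 1.1} almost verbatim, exploiting that \eqref{KGM} is exactly the $\beta=0$ instance of \eqref{problem 1} (the factor $4\pi$ being an inessential rescaling). Setting $\beta=0$ removes the quasilinear operator $\beta\Delta_4\phi$ and only \emph{simplifies} the second equation, so all estimates on the $u$-component should carry over. I would work in $E=\{u\in H^1(\mathbb{R}^3):\int_{\mathbb{R}^3}V(x)u^2\,dx<\infty\}$ with $\|u\|^2=\int_{\mathbb{R}^3}(|\nabla u|^2+V(x)u^2)\,dx$, which by $(V_1)$ is a Hilbert space and by $(V_2)$ embeds compactly into $L^p(\mathbb{R}^3)$ for $2\le p<6$. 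For fixed $u\in E$ the linear equation $-\Delta\phi+u^2\phi=-\omega u^2$ has a unique solution $\phi_u$ with $-\omega\le\phi_u\le 0$ by the maximum principle and $u\mapsto\phi_u$ of class $C^1$; using $(2\omega+\phi)\phi=(\omega+\phi)^2-\omega^2$ and substituting $\phi_u$ reduces \eqref{KGM} to finding nontrivial critical points of
\begin{equation*}
I_\lambda(u)=\frac12\|u\|^2-\frac{\omega}{2}\int_{\mathbb{R}^3}\phi_u u^2\,dx-\lambda\int_{\mathbb{R}^3}F(u)\,dx-\frac16\int_{\mathbb{R}^3}|u|^6\,dx,\qquad F(t)=\int_0^t f(s)\,ds .
\end{equation*}
Here the coupling term is nonnegative since $\phi_u\le 0$, and $-\omega\le\phi_u\le 0$ gives $\big|\int_{\mathbb{R}^3}\phi_u u^2\,dx\big|\le\omega\int_{\mathbb{R}^3}u^2\,dx$, so it is dominated by the quadratic part.

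Since $(f_1)$--$(f_2)$ impose no upper growth bound, $I_\lambda$ need not be differentiable on $E$, so I would replace $f$ by a cut-off $f_K$ agreeing with $f$ on $\{|t|\le K\}$ and extended with subcritical growth for $|t|>K$, yielding a $C^1$ functional $I_{\lambda,K}$. Condition $(f_1)$ gives $F_K(t)=o(t^2)$ at the origin, so with the Sobolev control of the $L^6$-term and the nonnegative coupling term one obtains $I_{\lambda,K}(u)\ge\alpha>0$ on a small sphere $\|u\|=\rho$, while the critical term forces $I_{\lambda,K}(te_0)\to-\infty$, producing $e$ with $I_{\lambda,K}(e)<0$. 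The compact embedding from $(V_2)$ disposes of the subcritical terms, so the sole loss of compactness comes from $L^6$; after checking boundedness of a Cerami sequence (needed because no Ambrosetti--Rabinowitz condition is assumed), a truncated-instanton computation should give the mountain-pass level $c_{\lambda,K}<\tfrac13 S^{3/2}$ for $\lambda$ small, $S$ the best Sobolev constant, which restores compactness and yields a nontrivial critical point $u_K$ of $I_{\lambda,K}$ with $I_{\lambda,K}(u_K)=c_{\lambda,K}$.

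It remains to remove the cut-off. Because $c_{\lambda,K}<\tfrac13 S^{3/2}$ and $\langle I_{\lambda,K}'(u_K),u_K\rangle=0$, the norm $\|u_K\|$ is bounded uniformly in $K$; feeding this into a De~Giorgi--Nash--Moser iteration on the first equation—truncating the test function to absorb the critical term $|u_K|^4u_K$—should produce an $L^\infty$ bound $\|u_K\|_\infty\le C$ with $C$ \emph{independent of $K$}. Taking $K>C$ then gives $f_K(u_K)=f(u_K)$ pointwise, so $u_K$ solves \eqref{KGM}, with $\lambda_2^*$ fixed by the requirement $c_{\lambda,K}<\tfrac13 S^{3/2}$. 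I expect the main obstacle to be precisely this Moser step: guaranteeing that the iteration constant depends only on $\|u_K\|$ (and on the fixed Sobolev and truncation data) rather than on $K$, since the critical exponent $6=2^*$ sits exactly at the borderline of the iteration and the nonlinearity $f_K$ must be controlled uniformly in $K$. A close second difficulty is the level estimate $c_{\lambda,K}<\tfrac13 S^{3/2}$, which is what pins down the smallness threshold $\lambda_2^*$ and where the interplay between the cut-off parameter $K$ and the parameter $\lambda$ has to be managed carefully.
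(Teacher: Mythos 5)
Your proposal is correct and is essentially the paper's own argument: the paper proves Theorem \ref{corollary 1.2} precisely by setting $\beta=0$ and rerunning the proof of Theorem \ref{theorem 1.1} (cut-off nonlinearity, reduction via $\phi_u$, mountain-pass geometry, level estimate $c<\tfrac13 S^{3/2}$ with Talenti functions, and Moser iteration to remove the truncation for $\lambda$ small). Your minor variations (truncating $f$ on $\{|t|\le K\}$ rather than also zeroing it for $t\le 0$, and using the level bound $\tfrac13 S^{3/2}$ in place of the paper's $T$-independent constant $D$ to bound $\|u_K\|$) do not change the substance of the argument.
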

\begin{remark}
We underline that the existence of nontrivial solution for problem \eqref{KGM} it has been proved by above papers with a different approach in this paper. However, it is interesting that we do not need $\lambda$ is sufficiently large or greater than a certain number.
\end{remark}

This paper is organized as follows. In Section $2$, we give some preliminary lemmas. In section $3$, we prove Theorems \ref{theorem 1.1}. In section $4$, we prove Theorems \ref{corollary 1.2}.
\section{Preliminaries}
In this section we explain the notations and some auxiliary lemmas which are useful later.

$H^{1}(\mathbb{R}^{3})$ denotes the usual Sobolev space equipped with the standard norm.

$L^{\ell}(\mathbb{R}^{3})$, $\ell\in[1,+\infty)$ denotes the Lebesgue space with the norm
$|u|_{\ell}=\left(\int_{\mathbb{R}^{3}}|u|^{\ell}dx\right)^{\frac{1}{\ell}}$.

Under $(V_{1})$ and $(V_{2})$, we define the Hilbert space
$$ E=\left\{u\in H^{1}(\mathbb{R}^{3}):\int_{\mathbb{R}^{3}}V(x)u^{2}dx<\infty\right\},$$
with respect the norm
$$\left\|u\right\|=\left (\int_{\mathbb{R}^{3}}\left(\left |\nabla u \right |^{2}+V(x)u^{2}\right)dx\right )^{\frac{1}{2}}.$$
Then, the embedding $E\hookrightarrow H^{1}(\mathbb{R}^{3})$ is continuous. The embedding from $E$ into $L^{q}(\mathbb{R}^{3})$ is compact for $q\in [2,6)$ and its detailed proof process can be seen in Lemma $3.4$ in \cite{MR2232879}.

Denote by $D(\mathbb{R}^{3})$ the completion of $C_{0}^{\infty}(\mathbb{R}^{3})$ with respect to the norm
$$\left\|\phi\right\|_{D\left (\mathbb{R}^{3}\right )}=\left (\int_{\mathbb{R}^{3}}\left |\nabla \phi \right |^{2} dx\right ) ^{\frac{1}{2}}+ \left (\int_{\mathbb{R}^{3}}\left |\nabla \phi \right |^{4} dx\right ) ^{\frac{1}{4}}.$$ It is easy to know that $D\left (\mathbb{R}^{3}\right )$ is continuously embedded in $D^{1,2}\left (\mathbb{R}^{3}\right )$, where $D^{1,2}\left (\mathbb{R}^{3}\right )$ is the completion of $C_{0}^{\infty}(\mathbb{R}^{3})$ with respect to the norm
$\left\|\phi\right\|_{D^{1,2}\left (\mathbb{R}^{3}\right )}=\left (\int_{\mathbb{R}^{3}}\left |\nabla \phi \right |^{2} dx\right ) ^{\frac{1}{2}}.$ Moreover, $D^{1,2}\left (\mathbb{R}^{3}\right )$ is continuously embedded in $L^{6}\left(\mathbb{R}^{3}\right)$ by Sobolev inequality and $D\left (\mathbb{R}^{3}\right )$ is continuously embedded in $L^{\infty}(\mathbb{R}^{3})$.

$C_{1},\ C_{2},\ \cdot\cdot\cdot$ denote positive constant possibly different in different places.

Indeed, solutions of \eqref{problem 1} are critical point of functional $G_{\lambda}:E(\mathbb{R}^{3})\times D(\mathbb{R}^{3}) \to \mathbb{R}$, defined by
\begin{eqnarray}\label{e1}
    \begin{split}
		G_{\lambda}(u,\phi)=&\frac{1}{2}\int_{\mathbb{R}^{3}}\left(\left | \nabla u\right |^{2}+V(x)u^{2}-(2\omega+\phi)\phi u^{2}\right)dx-\dfrac{1}{8\pi}\int_{\mathbb{R}^{3}}\left|\nabla \phi\right|^{2}dx \\
		&-\dfrac{\beta}{16\pi}\int_{\mathbb{R}^{3}}\left |\nabla\phi\right |^{4}dx-\int_{\mathbb{R}^{3}}\left(\lambda F(u)+\frac{1}{6}|u|^{6}\right)dx.
	\end{split}
\end{eqnarray}
Due to the strong indefiniteness of functional \eqref{e1}, we use the reduction method which can reduces the study of $G_{\lambda}(u,\phi)$ to study a new functional $I_{\lambda}(u)$ as in \cite{MR1714281}.

We state some properties of the second equation of problem \eqref{problem 1}.
\begin{lemma}\label{L 1}
	For any $u\in H^{1}\left(\mathbb{R}^{3}\right)$, we have:
	\begin{enumerate}[(i)]
		\item there exists a unique $\phi_{u}\in D\left(\mathbb{R}^{3}\right)$ which solves the second equation of problem \eqref{problem 1}.
		\item in the set $\left\{X: u(x)\ne 0\right\}$, we have $-\omega \le\phi_{u}\le 0.$
        \item $\left\|\phi_{u}\right\|_{D}\le C\left\|u\right\|^{2}$ and $\int_{\mathbb{R}^{3}}|\phi_{u}|u^{2}dx\le C\left\|u\right\|_{\frac{12}{5}}^{4}$.
	\end{enumerate}
\end{lemma}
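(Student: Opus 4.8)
The plan is to establish the three assertions in turn, each by a different manipulation of the weak form of the second equation of \eqref{problem 1}, namely
\[
-\int_{\mathbb{R}^{3}}\nabla\phi\cdot\nabla\psi\,dx-\beta\int_{\mathbb{R}^{3}}|\nabla\phi|^{2}\nabla\phi\cdot\nabla\psi\,dx=4\pi\int_{\mathbb{R}^{3}}(\omega+\phi)u^{2}\psi\,dx,
\]
valid for all test functions $\psi\in D(\mathbb{R}^{3})$. For existence and uniqueness in (i), I would realise $\phi_{u}$ variationally, as the unique minimiser over $D(\mathbb{R}^{3})$ of
\[
J_{u}(\phi)=\frac{1}{8\pi}\int_{\mathbb{R}^{3}}|\nabla\phi|^{2}\,dx+\frac{\beta}{16\pi}\int_{\mathbb{R}^{3}}|\nabla\phi|^{4}\,dx+\frac12\int_{\mathbb{R}^{3}}(2\omega+\phi)\phi u^{2}\,dx,
\]
whose Euler--Lagrange equation is exactly the second equation of \eqref{problem 1}. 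The two decisive structural facts are strict convexity and coercivity of $J_{u}$ on $D(\mathbb{R}^{3})$. The gradient terms are convex and together supply the full $D$-norm; the coupling term expands as $\int(\omega\phi+\tfrac12\phi^{2})u^{2}$, whose quadratic part is nonnegative (hence convex) while its linear part is controlled by $|\int\omega\phi u^{2}|\le\omega|\phi|_{6}|u|_{12/5}^{2}\le\omega S\|\nabla\phi\|_{2}|u|_{12/5}^{2}$ through the embedding $D^{1,2}\hookrightarrow L^{6}$ and H\"older's inequality. Since the quadratic gradient term dominates this linear growth (and the quartic term absorbs any growth of $\|\nabla\phi\|_{4}$ with $\|\nabla\phi\|_{2}$ bounded), $J_{u}$ is coercive; weak lower semicontinuity is automatic for a convex continuous functional, so the direct method produces a minimiser, and strict convexity makes it unique.

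For the two-sided bound (ii) I would test with truncations of $\phi_{u}$. Choosing $\psi=\phi_{u}^{+}=\max\{\phi_{u},0\}$ makes the left-hand side equal to $-\int_{\{\phi_{u}>0\}}(|\nabla\phi_{u}|^{2}+\beta|\nabla\phi_{u}|^{4})\le0$, while on $\{\phi_{u}>0\}$ one has $\omega+\phi_{u}>0$, so the right-hand side $4\pi\int(\omega+\phi_{u})u^{2}\phi_{u}^{+}\ge0$; both sides must vanish, forcing $\nabla\phi_{u}^{+}\equiv0$ and hence $\phi_{u}^{+}\equiv0$, i.e. $\phi_{u}\le0$. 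For the lower bound on $\{u\ne0\}$ I would test with $\psi=(\phi_{u}+\omega)^{-}$, which is admissible because $\phi_{u}\in L^{6}$ makes $\{\phi_{u}<-\omega\}$ of finite measure and $\psi\in D^{1,2}$; the gradient terms then contribute nonnegatively on $\{\phi_{u}<-\omega\}$, whereas the coupling term equals $-4\pi\int u^{2}(\omega+\phi_{u})^{2}\mathbf 1_{\{\phi_{u}<-\omega\}}\le0$, so again both vanish and $\phi_{u}\ge-\omega$ wherever $u\ne0$.

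The estimates in (iii) come from testing with $\phi_{u}$ itself, which gives
\[
\int_{\mathbb{R}^{3}}|\nabla\phi_{u}|^{2}\,dx+\beta\int_{\mathbb{R}^{3}}|\nabla\phi_{u}|^{4}\,dx=-4\pi\int_{\mathbb{R}^{3}}(\omega+\phi_{u})\phi_{u}u^{2}\,dx.
\]
Using $-\omega\le\phi_{u}\le0$ from (ii), I bound the right-hand side by $-4\pi\omega\int\phi_{u}u^{2}=4\pi\omega\int|\phi_{u}|u^{2}\le4\pi\omega S\|\nabla\phi_{u}\|_{2}|u|_{12/5}^{2}$. Discarding the nonnegative quartic term yields $\|\nabla\phi_{u}\|_{2}\le C|u|_{12/5}^{2}$, and feeding this back into the quartic term gives $\|\nabla\phi_{u}\|_{4}\le C|u|_{12/5}$; combined with the continuous embedding $E\hookrightarrow L^{12/5}$ these give $\|\phi_{u}\|_{D}\le C\|u\|^{2}$. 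The second estimate is then immediate, since $\int|\phi_{u}|u^{2}\le|\phi_{u}|_{6}|u|_{12/5}^{2}\le S\|\nabla\phi_{u}\|_{2}|u|_{12/5}^{2}\le C|u|_{12/5}^{4}$.

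The step I expect to be most delicate is the rigorous execution of the direct method for the quasilinear operator $\Delta+\beta\Delta_{4}$ on the nonstandard space $D(\mathbb{R}^{3})$: verifying coercivity and lower semicontinuity carefully, and justifying by a density or approximation argument that the truncated functions $\phi_{u}^{+}$ and $(\phi_{u}+\omega)^{-}$ are legitimate test functions, since $\omega$ is a constant and these are not a priori in $C_{0}^{\infty}(\mathbb{R}^{3})$. Once these functional-analytic points are settled, the convexity and Sobolev--H\"older estimates are routine.
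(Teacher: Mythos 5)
Your proposal is correct, and it is essentially the paper's own route with the outsourced steps filled in. For (i) and (ii) the paper gives no argument at all: it simply cites Lemma 3 of \cite{2002Nonlinear} for existence/uniqueness and Lemma 2.3 of \cite{2004Coupled} for $-\omega\le\phi_u\le 0$; your strictly-convex-minimization argument and your truncation tests with $\phi_u^{+}$ and $(\phi_u+\omega)^{-}$ are precisely the arguments of those references, so you have reconstructed them rather than found a new path (and the density/admissibility issues you flag are genuine but standard, and are handled in those references the same way). For (iii) the paper does exactly what you do: test the second equation with $\phi_u$, discard $-4\pi\int_{\mathbb{R}^3}\phi_u^2u^2\,dx\le 0$, and apply H\"older and Sobolev, arriving at \eqref{4586}; the only difference is that the paper then asserts $\|\phi_u\|_{D}\le C\|u\|^2$ in one line, whereas you separate the $L^2$- and $L^4$-gradient components and feed the first estimate back into the quartic term. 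Your version is in fact the more careful one, and it exposes a small imprecision shared by the statement and the paper's deduction: the argument yields $\|\nabla\phi_u\|_2\le C|u|_{12/5}^2$ but only $\|\nabla\phi_u\|_4\le C|u|_{12/5}$, hence $\|\phi_u\|_D\le C\left(\|u\|^2+\|u\|\right)$ rather than literally $C\|u\|^2$ (from the paper's single inequality \eqref{4586} one gets even less, namely $\|\phi_u\|_D\le C(\|u\|^2+\|u\|^{2/3})$). This discrepancy is harmless for the paper: in every later use (Lemma \ref{youjie}, Lemma \ref{FG}, Lemma \ref{PS condition}) only boundedness of $\phi_{u_n}$ in $D$ for bounded $\{u_n\}$ and the estimate $\int_{\mathbb{R}^3}|\phi_u|u^2\,dx\le C|u|_{12/5}^4$ are needed, and the latter follows cleanly, exactly as you note, from $\|\nabla\phi_u\|_2\le C|u|_{12/5}^2$ alone.
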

\begin{proof}
(i) is proved in Lemma $3$ of \cite{2002Nonlinear} and (ii) can be found in Lemma $2.3$ of \cite{2004Coupled}.
\begin{align}\label{4586}
    \begin{split}
   \int_{\mathbb{R}^{3}}\left| \nabla \phi_{u}\right| ^{2}dx+\beta\int_{\mathbb{R}^{3}}\left| \nabla \phi_{u}\right| ^{4}dx&=-\int_{\mathbb{R}^{3}}4\pi\omega\phi_{u} u^{2}dx-\int_{\mathbb{R}^{3}}4\pi\phi_{u}^{2} u^{2}dx
    \\&\le 4\pi\omega\int_{\mathbb{R}^{3}}|\phi_{u}| u^{2}dx
    \\&\le 4\pi\omega \left\|\phi_{u}\right\|_{D}\left\|u\right\|_{\frac{12}{5}}^{2}.
    \end{split}
\end{align}
we can get $\left\|\phi_{u}\right\|_{D}\le C\left\|u\right\|^{2}$ and $\int_{\mathbb{R}^{3}}|\phi_{u}|u^{2}dx\le C\left\|u\right\|_{\frac{12}{5}}^{4}$.
\end{proof}
From the second equation in \eqref{problem 1} and Lemma \ref{L 1} , we get
\begin{eqnarray}\label{e 4}
\frac{1}{4\pi}\int_{\mathbb{R}^{3}}\left| \nabla \phi_{u}\right| ^{2}dx+\frac{\beta}{4\pi}\int_{\mathbb{R}^{3}}\left| \nabla \phi_{u}\right| ^{4}dx=-\int_{\mathbb{R}^{3}}\left(\omega\phi_{u}+\phi_{u}^{2}\right)u^{2}dx.
\end{eqnarray}
Consider the functional $I_{\lambda}(u):E\to \mathbb{R}$ defined by
$I_{\lambda}(u)=G_{\lambda}\left (u,\phi_{u}\right )$ and combine \eqref{e 4}, we obtain
\begin{align}\label{e 3}
	\begin{split}
		I_{\lambda}(u)&=\frac{1}{2}\int_{\mathbb{R}^{3}}\left(\left | \nabla u\right |^{2}+V(x)u^{2}-(2\omega+\phi_{u})\phi_{u}u^{2}\right)dx-\frac{1}{8\pi}\int_{\mathbb{R}^{3}}|\nabla \phi_{u}|^{2}dx-\frac{\beta}{16\pi}\int_{\mathbb{R}^{3}}|\nabla \phi_{u}|^{4}dx \\
		&\ \ \ -\int_{\mathbb{R}^{3}}\left(\lambda F(u)+\frac{1}{6}|u|^{6}\right)dx
\\&=\frac{1}{2}\int_{\mathbb{R}^{3}}\left(\left| \nabla u\right| ^{2}+V(x)u^{2}\right)dx-\frac{3}{4}\int_{\mathbb{R}^{3}}\omega\phi_{u}u^{2}dx-\frac{1}{4}
\int_{\mathbb{R}^{3}}\phi_{u}^{2}u^{2}dx-\frac{1}{16\pi}\int_{\mathbb{R}^{3}}|\nabla\phi_{u}|^{2}dx\\
&\ \ \ -\int_{\mathbb{R}^{3}}\left(\lambda F(u)+\frac{1}{6}|u|^{6}\right)dx
\\&=\frac{1}{2}\int_{\mathbb{R}^{3}}\left(\left| \nabla u\right| ^{2}+V(x)u^{2}\right)dx-\frac{1}{2}\int_{\mathbb{R}^{3}} \omega\phi_{u}u^{2}dx+\frac{\beta}{16\pi}\int_{\mathbb{R}^{3}}\left| \nabla \phi_{u}\right| ^{4}dx\\&
\ \ \ -\int_{\mathbb{R}^{3}}\left(\lambda F(u)+\frac{1}{6}|u|^{6}\right)dx.
	\end{split}
\end{align}
Of course $I_{\lambda}(u)\in C^{1}\left(E,\mathbb{R}\right)$ and for any $u,v\in E$, we have
\begin{eqnarray}\label{qruation 51}
	\langle I_{\lambda}'(u),v \rangle=\int_{\mathbb{R}^{3}} \left\{\nabla u \cdot\nabla v+V(x)uv-(2\omega+\phi_{u})\phi_{u} uv-\lambda f(u)v-|u|^{4}uv\right\}dx.
\end{eqnarray}
\begin{lemma}(\cite{2004Coupled})
    The following statements are equivalent:
    \begin{enumerate}[(i)]
        \item $(u,\phi)\in E(\mathbb{R}^{3})\times D(\mathbb{R}^{3})$ is a critical point of $G_{\lambda}$, i.e. $(u,\phi)$ is a solution of problem \eqref{problem 1};
        \item $u$ is a critical point of $I_{\lambda}$ and $\phi=\phi_{u}$.
    \end{enumerate}
\end{lemma}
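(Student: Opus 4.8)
The plan is to reduce the whole statement to the computation of the two partial Gâteaux derivatives of $G_\lambda$ together with the single structural fact that, for every fixed $u$, the element $\phi_u$ is exactly the critical point of $\phi\mapsto G_\lambda(u,\phi)$. First I would compute, for fixed $u$ and any test function $\psi\in D(\mathbb{R}^3)$,
$$\langle\partial_\phi G_\lambda(u,\phi),\psi\rangle=-\frac{1}{4\pi}\int_{\mathbb{R}^3}\nabla\phi\cdot\nabla\psi\,dx-\frac{\beta}{4\pi}\int_{\mathbb{R}^3}|\nabla\phi|^2\nabla\phi\cdot\nabla\psi\,dx-\int_{\mathbb{R}^3}(\omega+\phi)u^2\psi\,dx,$$
and observe, after integrating by parts, that this vanishes for all $\psi$ precisely when $\phi$ is a weak solution of $\Delta\phi+\beta\Delta_4\phi=4\pi(\omega+\phi)u^2$. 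By Lemma \ref{L 1}(i) such a solution is unique, so $\partial_\phi G_\lambda(u,\phi)=0$ if and only if $\phi=\phi_u$. In particular $\partial_\phi G_\lambda(u,\phi_u)=0$ holds identically in $u$, which is the key to everything that follows.

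Next I would record the derivative in the $u$ slot: for any $v\in E$,
$$\langle\partial_u G_\lambda(u,\phi),v\rangle=\int_{\mathbb{R}^3}\big(\nabla u\cdot\nabla v+V(x)uv-(2\omega+\phi)\phi uv-\lambda f(u)v-|u|^4uv\big)\,dx,$$
whose vanishing is the weak form of the first equation of \eqref{problem 1}. Thus $(u,\phi)$ is a critical point of $G_\lambda$ if and only if both equations of \eqref{problem 1} hold, i.e. $(u,\phi)$ solves the system. The remaining task is to link $\partial_u G_\lambda$ with $I_\lambda'$. Since $I_\lambda(u)=G_\lambda(u,\phi_u)$, the chain rule gives
$$\langle I_\lambda'(u),v\rangle=\langle\partial_u G_\lambda(u,\phi_u),v\rangle+\langle\partial_\phi G_\lambda(u,\phi_u),\phi_u'[v]\rangle=\langle\partial_u G_\lambda(u,\phi_u),v\rangle,$$
the cross term being zero by the identity $\partial_\phi G_\lambda(u,\phi_u)=0$ from the first step; comparing with \eqref{qruation 51} confirms this matches the stated expression for $I_\lambda'$.

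With the identity $\langle I_\lambda'(u),v\rangle=\langle\partial_u G_\lambda(u,\phi_u),v\rangle$ in hand, the equivalence is immediate. If $(u,\phi)$ is a critical point of $G_\lambda$, then $\partial_\phi G_\lambda(u,\phi)=0$ forces $\phi=\phi_u$, and $\partial_u G_\lambda(u,\phi_u)=0$ then yields $I_\lambda'(u)=0$; this gives (i)$\Rightarrow$(ii). Conversely, if $\phi=\phi_u$ and $I_\lambda'(u)=0$, then $\partial_\phi G_\lambda(u,\phi_u)=0$ by construction of $\phi_u$ while $\partial_u G_\lambda(u,\phi_u)=0$ follows from the identity above, so both partials vanish and $(u,\phi_u)$ is a critical point of $G_\lambda$; this is (ii)$\Rightarrow$(i).

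The one step that needs genuine care, and which I expect to be the main obstacle, is the legitimacy of the chain rule, namely that $u\mapsto\phi_u$ is of class $C^1$ from $E$ into $D(\mathbb{R}^3)$ so that $\phi_u'[v]$ exists and $I_\lambda\in C^1(E,\mathbb{R})$. I would obtain this from the implicit function theorem applied to $T(u,\phi)=\partial_\phi G_\lambda(u,\phi)$: for fixed $u$ the functional $\phi\mapsto G_\lambda(u,\phi)$ is strictly concave and coercive (the quadratic term $-\frac{1}{8\pi}\int_{\mathbb{R}^3}|\nabla\phi|^2\,dx$, the quartic term $-\frac{\beta}{16\pi}\int_{\mathbb{R}^3}|\nabla\phi|^4\,dx$ and $-\frac12\int_{\mathbb{R}^3}\phi^2u^2\,dx$ all carry the favorable sign), so $\phi_u$ is its unique maximizer and $\partial_\phi T(u,\phi_u)$ should be boundedly invertible. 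The delicate point is that the quasilinear $\Delta_4$-contribution makes this linearization a degenerate-elliptic operator, so invertibility must be argued through strict monotonicity of the underlying operator rather than by a naive linear estimate; once that is secured, the implicit function theorem supplies the required $C^1$ dependence and validates every computation above.
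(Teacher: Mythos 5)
The paper never actually proves this lemma: it is stated with a citation to \cite{2004Coupled}, where the standard Benci--Fortunato-type reduction is carried out. Your proposal reconstructs precisely that argument: computing $\partial_\phi G_\lambda$ and $\partial_u G_\lambda$, identifying $\partial_\phi G_\lambda(u,\phi)=0$ with the second equation of \eqref{problem 1} and hence, by the uniqueness in Lemma \ref{L 1}(i), with $\phi=\phi_u$, and then cancelling the cross term in the chain rule to obtain $\langle I_\lambda'(u),v\rangle=\langle\partial_u G_\lambda(u,\phi_u),v\rangle$ as in \eqref{qruation 51}. In structure this is the right (and the intended) proof.

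The genuine gap is the step you yourself flag at the end, and your proposed repair does not close it. The implicit function theorem on the Banach space $D(\mathbb{R}^{3})$ requires the linearization $\partial^{2}_{\phi\phi}G_\lambda(u,\phi_u)$ to be an isomorphism from $D(\mathbb{R}^{3})$ onto its dual; strict monotonicity of the linearized operator yields only injectivity, while surjectivity (with bounded inverse) is exactly where the degeneracy of $\Delta_4$ bites. Indeed, the relevant quadratic form (the second derivative of $\phi\mapsto -G_\lambda(u,\phi)$ at $\phi_u$) is
\begin{equation*}
a(\psi,\psi)=\frac{1}{4\pi}\int_{\mathbb{R}^{3}}|\nabla\psi|^{2}\,dx
+\frac{\beta}{4\pi}\int_{\mathbb{R}^{3}}\left(|\nabla\phi_u|^{2}|\nabla\psi|^{2}+2(\nabla\phi_u\cdot\nabla\psi)^{2}\right)dx
+\int_{\mathbb{R}^{3}}u^{2}\psi^{2}\,dx,
\end{equation*}
which controls $\int_{\mathbb{R}^{3}}|\nabla\psi|^{2}dx$ but nothing of the order of $\int_{\mathbb{R}^{3}}|\nabla\psi|^{4}dx$; hence it is not coercive for the norm of $D(\mathbb{R}^{3})$, and neither Lax--Milgram nor Minty--Browder solves the linearized equation in $D(\mathbb{R}^{3})$ for an arbitrary right-hand side in the dual. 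The standard way to finish, avoiding any differentiation of $u\mapsto\phi_u$, is the envelope (concave-maximization) argument: since $\phi\mapsto G_\lambda(u,\phi)$ is strictly concave with unique maximizer $\phi_u$, for $t>0$ one has
\begin{equation*}
G_\lambda(u+tv,\phi_u)-G_\lambda(u,\phi_u)\ \le\ I_\lambda(u+tv)-I_\lambda(u)\ \le\ G_\lambda(u+tv,\phi_{u+tv})-G_\lambda(u,\phi_{u+tv}),
\end{equation*}
and, dividing by $t$ and letting $t\to 0^{+}$, both bounds converge to $\langle\partial_u G_\lambda(u,\phi_u),v\rangle$ using only the \emph{continuity} of $u\mapsto\phi_u$ (essentially what Lemma \ref{FG} supplies), not its differentiability. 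This gives the G\^{a}teaux derivative of $I_\lambda$, whose continuity then yields $I_\lambda\in C^{1}(E,\mathbb{R})$ with formula \eqref{qruation 51}, after which your equivalence argument goes through verbatim.
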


From $(f_{2})$ we deuce exist $T>0$ large enough such that $f(T)>0$. Let
\begin{eqnarray}\label{a1}
	h_{T}(t)=\left\{   \begin{array}{ll}
	f(t),\ &0< t\le T, \\
    C_{T}t^{p-1},\ & t>T,\\
    0,\ & t\le 0,
\end{array}
\right.	
\end{eqnarray}
where $f$ satisfies $(f_{1})\ (f_{2})$ and $C_{T}=\frac{f(T)}{T^{p-1}}\ (4<p<6)$. $h_{T}$ is a continuous function and satisfies the following properties:
\begin{enumerate}[($h_{1}$)]
	\item $\lim_{t\to 0^{+}}\frac{h_{T}(t)}{t}=0.$
\end{enumerate}
\begin{enumerate}[($h_{2}$)]
	\item $\lim_{t\to +\infty}\frac{H_{T}(t)}{t^{4}}=+\infty,$ where $H_{T}(t)=\int_{0}^{t}h_{T}(s)ds.$
\end{enumerate}
\begin{enumerate}[($h_{3}$)]
	\item $|h_{T}(t)|\le C_{T}^{*}|t|+C_{T}|t|^{p-1},$ where $C_{T}^{*}=\max_{t\in [0,T]}\frac{|f(t)|}{t}$.
\end{enumerate}
\begin{enumerate}[($h_{4}$)]
	\item There exists $\mu=\mu(T)>0$ such that $th_{T}(t)-4H_{T}(t)\ge -\mu t^{2}$ for all $t\geq0.$
\end{enumerate}
Next, we will use the cut-off functional $h$ to replace $f$ in problem \eqref{problem 1} and combine Lemma \ref{L 1}. We can get a new problem, namely
\begin{eqnarray}\label{new problem}
	\left\{   \begin{array}{ll}
	-\Delta u + V(x)u-(2\omega+\phi_{u})\phi_{u} u=\lambda h_{T}(u)+|u|^{4}u,   \ & x\in \  \mathbb{R}^{3}, \\
	u(x)>0,   \ & u\in E.\\
\end{array}
\right.	
\end{eqnarray}
we will study critical points for the functional
\begin{align}\label{277}
	\begin{split}
		J_{\lambda,T}(u)&=\frac{1}{2}\int_{\mathbb{R}^{3}}\left(\left | \nabla u\right |^{2}+V(x)u^{2}-(2\omega+\phi_{u})\phi_{u}u^{2}\right)dx-\frac{1}{8\pi}\int_{\mathbb{R}^{3}}|\nabla \phi_{u}|^{2}dx-\frac{\beta}{16\pi}\int_{\mathbb{R}^{3}}|\nabla \phi_{u}|^{4}dx \\
		&\ \ \ -\int_{\mathbb{R}^{3}}\left(\lambda H_{T}(u)+\frac{1}{6}|u|^{6}\right)dx
\\&=\frac{1}{2}\int_{\mathbb{R}^{3}}\left(\left| \nabla u\right| ^{2}+V(x)u^{2}\right)dx-\frac{3}{4}\int_{\mathbb{R}^{3}}\omega\phi_{u}u^{2}dx-\frac{1}{4}
\int_{\mathbb{R}^{3}}\phi_{u}^{2}u^{2}dx-\frac{1}{16\pi}\int_{\mathbb{R}^{3}}|\nabla\phi_{u}|^{2}dx\\
&\ \ \ -\int_{\mathbb{R}^{3}}\left(\lambda H_{T}(u)+\frac{1}{6}|u|^{6}\right)dx
\\&=\frac{1}{2}\int_{\mathbb{R}^{3}}\left(\left| \nabla u\right| ^{2}+V(x)u^{2}\right)dx-\frac{1}{2}\int_{\mathbb{R}^{3}} \omega\phi_{u}u^{2}dx+\frac{\beta}{16\pi}\int_{\mathbb{R}^{3}}\left| \nabla \phi_{u}\right| ^{4}dx\\&
\ \ \ -\int_{\mathbb{R}^{3}}\left(\lambda H_{T}(u)+\frac{1}{6}|u|^{6}\right)dx
	\end{split}
\end{align}
as solutions to \eqref{new problem}. Through direct calculation ,we know that the function $h_{T}$ is continuous, so we have $J_{\lambda,T}\in C^{1}(E,\mathbb{R})$ and for any $u,v\in E$,
\begin{eqnarray}\label{qruation 5}
	\langle J_{\lambda,T}'(u),v \rangle=\int_{\mathbb{R}^{3}} \left\{\nabla u \cdot\nabla v+V(x)uv-(2\omega+\phi_{u})\phi_{u}uv-\lambda h_{T}(u)v-|u|^{4}uv\right\}dx.
\end{eqnarray}
The next lemma shows that functional $J_{\lambda,T}(u)$ satisfies the mountain pass geometry.
\begin{lemma}\label{geometric}
	The functional $J_{\lambda , T}(u)$ satisfies the following conditions:
	\begin{enumerate}[(i)]
		\item there exists $\alpha,\rho >0$ such that $J_{\lambda ,T}(u)\ge\alpha$ when $\left\|u\right\|=\rho$;
		\item there exists $e\in E$ such that $\left\|e\right\|>\rho$ and $J_{\lambda,T}(e)<0.$
	\end{enumerate}
\end{lemma}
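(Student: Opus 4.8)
The plan is to verify the two mountain pass conditions separately. The last expression for $J_{\lambda,T}$ in \eqref{277} is the convenient one to work with, because by Lemma \ref{L 1}(ii) we have $\phi_u\le 0$, so both coupling terms
\[
-\tfrac{1}{2}\int_{\mathbb{R}^{3}}\omega\phi_u u^{2}\,dx\quad\text{and}\quad \tfrac{\beta}{16\pi}\int_{\mathbb{R}^{3}}|\nabla\phi_u|^{4}\,dx
\]
are nonnegative. This sign information makes condition (i) almost immediate and is the crux of condition (ii).

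For (i), I would simply discard the two nonnegative coupling terms to obtain
\[
J_{\lambda,T}(u)\ge \tfrac{1}{2}\|u\|^{2}-\lambda\int_{\mathbb{R}^{3}}H_{T}(u)\,dx-\tfrac{1}{6}\int_{\mathbb{R}^{3}}|u|^{6}\,dx.
\]
Combining the sublinearity at the origin $(h_{1})$ with the growth bound $(h_{3})$, for every $\varepsilon>0$ there is $C_{\varepsilon}>0$ with $H_{T}(t)\le \tfrac{\varepsilon}{2}t^{2}+\tfrac{C_{\varepsilon}}{p}|t|^{p}$, so using the continuous embeddings $E\hookrightarrow L^{q}(\mathbb{R}^{3})$ with constants $S_{q}$ for $q\in[2,6]$,
\[
J_{\lambda,T}(u)\ge \Big(\tfrac{1}{2}-\tfrac{\lambda\varepsilon}{2}S_{2}^{2}\Big)\|u\|^{2}-\lambda C\|u\|^{p}-C'\|u\|^{6}.
\]
Fixing $\varepsilon$ small (depending on the given $\lambda$) so the bracket is $\ge\tfrac14$, and then taking $\rho>0$ small, the quadratic term dominates since $4<p<6$ and $6>2$; hence $J_{\lambda,T}(u)\ge\alpha>0$ on $\|u\|=\rho$.

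For (ii), fix any $u_{0}\in E\setminus\{0\}$ with $u_{0}\ge 0$ and study $t\mapsto J_{\lambda,T}(tu_{0})$ as $t\to+\infty$. The delicate point is to control the two positive coupling terms by a quantity of order $t^{2}$. Using $-\omega\le\phi_{tu_0}\le 0$ gives $-\tfrac{1}{2}\omega\int\phi_{tu_0}(tu_0)^{2}\,dx\le \tfrac{1}{2}\omega^{2}t^{2}|u_0|_{2}^{2}$, while from the identity \eqref{e 4} together with the pointwise estimate $-\int(\omega\phi_u+\phi_u^{2})u^{2}\,dx\le \tfrac{\omega^{2}}{4}|u|_{2}^{2}$ (maximizing $s\mapsto s(\omega-s)$ over $s=-\phi_u\in[0,\omega]$) one gets $\tfrac{\beta}{16\pi}\int|\nabla\phi_{tu_0}|^{4}\,dx\le \tfrac{\omega^{2}}{16}t^{2}|u_0|_{2}^{2}$. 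Bounding $-\lambda\int H_{T}(tu_0)\,dx\le \lambda\int|H_{T}(tu_0)|\,dx$ from above by $\lambda\big(Ct^{2}|u_0|_{2}^{2}+Ct^{p}|u_0|_{p}^{p}\big)$ via $(h_{3})$, I obtain
\[
J_{\lambda,T}(tu_0)\le C_{1}t^{2}+\lambda C_{2}t^{p}-\tfrac{1}{6}t^{6}|u_0|_{6}^{6}.
\]
Since $4<p<6$ and $|u_0|_{6}>0$, the $-t^{6}$ term dominates and $J_{\lambda,T}(tu_0)\to-\infty$; choosing $e=t_{0}u_{0}$ with $t_0$ large enough that $\|e\|>\rho$ and $J_{\lambda,T}(e)<0$ completes the argument.

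The main obstacle is precisely the Born-Infeld term $\tfrac{\beta}{16\pi}\int|\nabla\phi_u|^{4}\,dx$ in (ii). The naive estimate from Lemma \ref{L 1}(iii), $\|\phi_u\|_{D}\le C\|u\|^{2}$, would only yield $\int|\nabla\phi_u|^{4}\,dx\le C\|u\|^{8}\sim t^{8}$, which overwhelms the $t^{6}$ gained from the critical term and would make $J_{\lambda,T}(tu_0)\to+\infty$, destroying the geometry. The resolution is to return to the equation \eqref{e 4} and exploit the $L^{\infty}$ bound $-\omega\le\phi_u\le 0$ to replace the quartic gradient energy by the $L^{2}$ quantity $|u|_{2}^{2}$, which scales only like $t^{2}$. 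This is the single estimate that requires care; the remaining steps are routine applications of the Sobolev embeddings and properties $(h_{1})$ and $(h_{3})$.
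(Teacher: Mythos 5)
Your proposal is correct, and part (i) matches the paper's argument almost verbatim (drop the nonnegative coupling terms, then use $(h_{1})$, $(h_{3})$ and the Sobolev embedding; the only cosmetic difference is that you keep the exponent $p$ where the paper uses $6$). For part (ii), however, you take a genuinely different route. The paper works with the \emph{first} expression for $J_{\lambda,T}$ in \eqref{277}, in which the Born--Infeld term $-\frac{\beta}{16\pi}\int|\nabla \phi_{tu}|^{4}dx$, as well as $-\frac{t^{2}}{2}\int\phi_{tu}^{2}u^{2}dx$ and $-\frac{1}{8\pi}\int|\nabla\phi_{tu}|^{2}dx$, all carry a negative sign and are simply discarded; the one remaining positive coupling term $-t^{2}\int\omega\phi_{tu}u^{2}dx$ is $O(t^{2})$ by the pointwise bound $-\omega\le\phi_{tu}\le 0$, and the paper then invokes \eqref{213} to produce an extra $-\lambda M t^{4}$ term before letting $t\to+\infty$. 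You instead work with the reduced (third) expression, where the Born--Infeld term enters with a \emph{positive} sign, so the obstacle you single out is real in your formulation but is an artifact of that choice --- the paper never faces it. Your resolution is nonetheless valid and is the most interesting part of the proposal: using the identity \eqref{e 4} together with $\max_{s\in[0,\omega]}s(\omega-s)=\omega^{2}/4$ gives $\frac{\beta}{16\pi}\int|\nabla\phi_{tu_{0}}|^{4}dx\le\frac{\omega^{2}}{16}t^{2}|u_{0}|_{2}^{2}$, a sharper quantitative bound than the $\|u\|^{8}$-type estimate of Lemma \ref{L 1}(iii). Your version also buys a small gain in rigor: by fixing $u_{0}\ge 0$ and estimating $H_{T}$ only through $(h_{3})$, letting the critical term $-\frac{t^{6}}{6}|u_{0}|_{6}^{6}$ alone drive $J_{\lambda,T}(tu_{0})\to-\infty$, you avoid \eqref{213}, whose lower bound $H_{T}(s)\ge Ms^{4}-C_{M}s^{2}$ fails for negative $s$ (where $H_{T}\equiv 0$) and therefore implicitly requires a sign restriction that the paper's ``fix $u\in E\setminus\{0\}$'' glosses over. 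In exchange, the paper's route is shorter: no estimate on the quartic gradient energy is needed at all, only sign considerations.
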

\begin{proof}
From $(h_{1})$ and $(h_{3})$, there exists a  $\varepsilon>0$ small such that
$$|h_{T}(t)|\le\varepsilon|t|+C_{\varepsilon}|t|^{5},$$
and
\begin{align}\label{295}
|H_{T}(t)|\le\frac{\varepsilon}{2}|t|^{2}+\frac{C_{\varepsilon}}{6}|t|^{6}.
\end{align}
Then from Lemma \ref{L 1}, \eqref{277}, \eqref{295} and Sobolev embedding theorem, for every $u\in E\setminus\left\{0\right\}$ we can deduce
\begin{align}\label{}
	\begin{split}
		J_{\lambda,T}(u)&=\frac{1}{2}\int_{\mathbb{R}^{3}}\left(\left| \nabla u\right| ^{2}+V(x)u^{2}\right)dx-\frac{1}{2}\int_{\mathbb{R}^{3}} \omega\phi_{u}u^{2}dx+\frac{\beta}{16\pi}\int_{\mathbb{R}^{3}}\left| \nabla \phi_{u}\right| ^{4}dx\\&
\ \ \ -\int_{\mathbb{R}^{3}}\left(\lambda H_{T}(u)+\frac{1}{6}|u|^{6}\right)dx\\&
\ \ \ \ge\frac{1}{2}\int_{\mathbb{R}^{3}}\left(\left| \nabla u\right| ^{2}+V(x)u^{2}\right)dx-\int_{\mathbb{R}^{3}}\left(\lambda H_{T}(u)+\frac{1}{6}|u|^{6}\right)dx\\&
\ \ \ \ge\frac{1}{2}\left\|u\right\|^{2}-\int_{\mathbb{R}^{3}}\left(\frac{\lambda\varepsilon}{2}
|u|^{2}+\frac{\lambda C_{\varepsilon}}{6}|u|^{6}+\frac{1}{6}|u|^{6}\right)dx\\&\ \ \
\ge\frac{1}{2}\left\|u\right\|^{2}-C\varepsilon\left\|u\right\|^{2}-C\left\|u\right\|^{6}.
	\end{split}
\end{align}
Since $\varepsilon$ is arbitrarily small, there exists $\rho>0$ and $\alpha>0$ such that $J_{\lambda.T}(u)\ge\alpha>0$ for $\left\|u\right\|=\rho.$ Hence, $J_{\lambda.T}(u)$ satisfied $(i)$.

From $(h_{1})$ $(h_{2})$ and $(h_{3})$ we get for any $M>0$ there exists a positive constant $C_{M}>0$ such that
\begin{align}\label{213}
H_{T}(u)\ge Mt^{4}-C_{M}t^{2},\ \ \ \forall t>0.
\end{align}
So, fix $u\in E\setminus\left\{0\right\}$ and $t>0$, From \eqref{277} and \eqref{213} we obtain
\begin{align}\label{}
	\begin{split}
		J_{\lambda,T}(tu)&=\frac{t^{2}}{2}\int_{\mathbb{R}^{3}}\left(\left | \nabla u\right |^{2}+V(x)u^{2}\right)dx-t^{2}\int_{\mathbb{R}^{3}}\omega\phi_{tu}u^{2}dx-
\frac{t^{2}}{2}\int_{\mathbb{R}^{3}}\phi_{tu}^{2}u^{2}-
\frac{1}{8\pi}\int_{\mathbb{R}^{3}}|\nabla \phi_{tu}|^{2}dx\\
&\ \ \ -\frac{\beta}{16\pi}\int_{\mathbb{R}^{3}}|\nabla \phi_{tu}|^{4}dx-\int_{\mathbb{R}^{3}}\left(\lambda H_{T}(tu)+\frac{t^{6}}{6}|u|^{6}\right)dx\\
& \le\frac{t^{2}}{2}\int_{\mathbb{R}^{3}}\left(\left | \nabla u\right |^{2}+V(x)u^{2}\right)dx-t^{2}\int_{\mathbb{R}^{3}}\omega\phi_{tu}u^{2}dx+
\lambda C_{M}t^{2}\int_{\mathbb{R}^{3}}|u|^{2}dx\\
&\ \ \ -\lambda Mt^{4}\int_{\mathbb{R}^{3}}|u|^{4}dx-\frac{t^{6}}{6}\int_{\mathbb{R}^{3}}|u|^{6}dx.
	\end{split}
\end{align}	
we can easy to know that $J_{\lambda,T}(tu)\to -\infty$ for $t\to +\infty.$ The step (ii) is proved by taking $e=t_{0}u$ with $t_{0}>0$ large enough.
\end{proof}
From Lemma \ref{geometric}, we can easily get a $PS$ sequence, namely there exists a sequence $\left\{u_{n}\right\}\subset E$ satisfying
\begin{align}\label{Psc}
J_{\lambda,T}(u_{n})\to c_{\lambda,T},\ \ \
J'_{\lambda,T}(u_{n})\to 0,
\end{align}
where
$$c_{\lambda,T}:=\inf_{\gamma\in\Gamma}\max_{0\le t\le 1}J_{\lambda,T}(\gamma(t)),$$
$$\Gamma:=\left\{\gamma\in C\left([0,1],H^{1}(\mathbb{R}^{3})\right):\gamma(0)=0,\ \gamma(1)=e\right\}.$$
\begin{lemma}\label{youjie}
	The sequence $\left\{u_{n}\right\}$ defined by \eqref{Psc} is bounded in $E$.
\end{lemma}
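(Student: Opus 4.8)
The plan is to test the functional against a suitable linear combination of $J_{\lambda,T}(u_n)$ and $\langle J'_{\lambda,T}(u_n),u_n\rangle$, the standard device for extracting coercivity from a Palais--Smale sequence. Since the critical term enters \eqref{277} with weight $\tfrac16$ and its derivative pairing carries weight $1$, while the cut-off data in $(h_2)$ and $(h_4)$ are calibrated at the quartic level, the correct multiplier is $\tfrac14$. Concretely, I would examine
\[
J_{\lambda,T}(u_n)-\tfrac14\langle J'_{\lambda,T}(u_n),u_n\rangle ,
\]
using the third expression for $J_{\lambda,T}$ in \eqref{277} together with \eqref{qruation 5}, and show that the right-hand side bounds $\|u_n\|^2$ from below up to terms controlled by the Palais--Smale conditions \eqref{Psc}.

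The key observation is that every nonlocal and critical contribution has a favourable sign once combined. For the coupling terms, $-\tfrac12\int\omega\phi_{u}u^2$ from $J_{\lambda,T}$ and $+\tfrac14\int(2\omega+\phi_{u})\phi_{u}u^2$ from the derivative pairing collapse to $\tfrac14\int\phi_{u}^{2}u^2$, which is nonnegative; here I invoke only the algebra, while Lemma~\ref{L 1}(ii) guarantees the auxiliary sign bound $-\omega\le\phi_{u}\le 0$. The Born--Infeld term $\tfrac{\beta}{16\pi}\int|\nabla\phi_{u}|^4$ is nonnegative, and the critical term reduces to $\bigl(\tfrac14-\tfrac16\bigr)\int|u_n|^6=\tfrac1{12}\int|u_n|^6\ge 0$. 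Thus the only possibly negative leftover is the nonlinearity $\tfrac{\lambda}{4}\int\bigl(u_n h_T(u_n)-4H_T(u_n)\bigr)dx$, which property $(h_4)$ controls from below by $-\tfrac{\lambda\mu}{4}\int u_n^2\,dx$. This is exactly the step that replaces the missing Ambrosetti--Rabinowitz condition.

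To absorb this last term I would use $(V_{1})$, namely $V(x)\ge V_0$, to write $\int u_n^2\,dx\le \tfrac1{V_0}\|u_n\|^2$. Collecting the estimates yields
\[
J_{\lambda,T}(u_n)-\tfrac14\langle J'_{\lambda,T}(u_n),u_n\rangle
\ \ge\ \Bigl(\tfrac14-\tfrac{\lambda\mu}{4V_0}\Bigr)\|u_n\|^2 .
\]
On the other hand, \eqref{Psc} gives $J_{\lambda,T}(u_n)=c_{\lambda,T}+o(1)$ and $\langle J'_{\lambda,T}(u_n),u_n\rangle\le \|J'_{\lambda,T}(u_n)\|\,\|u_n\|=o(1)\|u_n\|$, so the left-hand side is at most $C+o(1)\|u_n\|$. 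Hence
\[
\Bigl(\tfrac14-\tfrac{\lambda\mu}{4V_0}\Bigr)\|u_n\|^2\le C+o(1)\|u_n\|,
\]
and boundedness of $\{u_n\}$ follows as soon as the coefficient is strictly positive, i.e. $\lambda<V_0/\mu$.

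The main obstacle, and the reason the smallness of $\lambda$ enters, is precisely keeping that coefficient positive: the constant $\mu=\mu(T)$ produced by $(h_4)$ is the price paid for dropping the Ambrosetti--Rabinowitz condition, and it forces the threshold $\lambda_{1}^{*}$ to be taken no larger than $V_0/\mu$ (shrinking it here if necessary). A secondary point requiring care is that the nonlocal terms must \emph{cooperate} rather than spoil the estimate; this is guaranteed by the sign information $-\omega\le\phi_{u}\le 0$ of Lemma~\ref{L 1}(ii), which makes both $\tfrac14\int\phi_u^2u^2$ and the Born--Infeld term nonnegative, so no further analysis of the $\phi_{u}$ dependence is needed for boundedness.
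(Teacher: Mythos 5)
Your algebra up to the inequality
\[
J_{\lambda,T}(u_n)-\tfrac14\langle J_{\lambda,T}'(u_n),u_n\rangle \ \ge\ \tfrac14\|u_n\|^2-\tfrac{\lambda\mu}{4}\int_{\mathbb{R}^3}u_n^2\,dx
\]
is exactly the paper's estimate \eqref{AS}: the cancellation of the coupling terms, the signs of the Born--Infeld and critical terms, and the use of $(h_4)$ all coincide (note that $\tfrac14\int\phi_{u_n}^2u_n^2\,dx\ge0$ is automatic, no appeal to Lemma \ref{L 1}(ii) is needed there). The divergence, and the gap, is in how the leftover term $-\tfrac{\lambda\mu}{4}\int u_n^2\,dx$ is handled. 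You absorb it via $(V_1)$, $\int u_n^2\,dx\le V_0^{-1}\|u_n\|^2$, which only produces a positive coefficient when $\lambda<V_0/\mu(T)$. But the lemma is stated for every $\lambda>0$ and $T>0$, and it must hold in that generality: Theorem \ref{T2}, which rests on this lemma, asserts existence of a nontrivial solution of the truncated problem \eqref{new problem} for \emph{all} $\lambda>0$, $T>0$. For $\lambda\ge V_0/\mu(T)$ your coefficient $\tfrac14-\tfrac{\lambda\mu}{4V_0}$ is nonpositive and the argument yields nothing, so what you have proved is a strictly weaker statement than the one claimed.

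The missing idea is the paper's second step, which removes any smallness condition on $\lambda$: argue by contradiction, set $v_n=u_n/\|u_n\|$, and use the compact embedding $E\hookrightarrow L^2(\mathbb{R}^3)$ (a consequence of $(V_2)$, not merely $(V_1)$) to get $v_n^+\to v^+$ in $L^2$. Dividing \eqref{AS} by $\|u_n\|^2$ forces $v^+\neq0$, so $u_n^+=v_n^+\|u_n\|\to+\infty$ on a set of positive measure; then dividing $\langle J'_{\lambda,T}(u_n),u_n\rangle$ by $\|u_n\|^4$, bounding the coupling term through Lemma \ref{L 1}(iii), and invoking the superquartic growth of $h_T$ at infinity (by construction $h_T(t)t=C_Tt^{p}$ with $4<p<6$ for $t>T$, cf.\ $(h_2)$) gives $0\le-\infty$, a contradiction, for every $\lambda>0$. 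Your closing suggestion --- shrink $\lambda_1^*$ below $V_0/\mu$ --- is not unreasonable for the final Theorem \ref{theorem 1.1}, since the paper's own proof of that theorem eventually restricts to $\lambda<V_0/(2\mu(T))$ in \eqref{320} and $T$ is fixed before $\lambda$ is chosen; but as a proof of Lemma \ref{youjie}, and hence of Theorem \ref{T2} as stated, the smallness assumption is a genuine loss rather than a harmless bookkeeping choice.
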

\begin{proof}
From \eqref{277}, \eqref{qruation 5}, \eqref{Psc} and $(h_{4})$, we obtain that
\begin{align}\label{AS}
	\begin{split}
		&c_{\lambda,T}+o_{n}(1)\left\|u_{n}\right\|\\
        &\ \ \ \ge J_{\lambda,T}(u_{n})-\frac{1}{4}J'_{\lambda,T}(u_{n})\\
        &\ \ \ =\frac{1}{4}\int_{\mathbb{R}^{3}}\left(\left| \nabla u_{n}\right| ^{2}+V(x)u_{n}^{2}\right)dx+
        \frac{\beta}{16\pi}\int_{\mathbb{R}^{3}}|\nabla\phi_{u_{n}}|^{4}dx
        +\frac{1}{4}\int_{\mathbb{R}^{3}}\phi_{u_{n}}^{2}u_{n}^{2}dx\\&\ \ \ \ \ \ +\frac{\lambda}{4}\int_{\mathbb{R}^{3}}\left(h_{T}(u_{n})u_{n}-4H_{T} (u_{n})\right)dx+\frac{1}{12}\int_{\mathbb{R}^{3}}|u_{n}|^{6}dx\\
        &\ \ \ \ge\frac{1}{4}\int_{\mathbb{R}^{3}}\left(\left| \nabla u_{n}\right| ^{2}+V(x)u_{n}^{2}\right)dx+
        \frac{\lambda}{4}\int_{\mathbb{R}^{3}}\left(h_{T}(u_{n})u_{n}-4H_{T} (u_{n})\right)dx\\
        &\ \ \ \ge\frac{1}{4}\left\|u_{n}\right\|^{2}-\frac{\lambda\mu}{4}\int_{\mathbb{R}^{3}}|u_{n}^{+}|^{2}dx,	\end{split}
\end{align}
where $u_{n}(x)=u_{n}^{+}(x)+u_{n}^{-}(x)$, $u_{n}^{+}(x)=\max \left\{u_{n}(x),0\right\}$,$u_{n}^{-}(x)=\min\left\{u_{n}(x),0\right\}$.
We use the contradiction method that assume $\left\|u_{n}\right\|\to +\infty$ as $n\to\infty$. Let $v_{n}=\frac{u_{n}}{\left\|u_{n}\right\|}$, $n\ge 1$.

Due to $E\hookrightarrow L^{q}(\mathbb{R}^{3})$, $q\in[2,6)$ is compact, it is easy to assume $v_{n}\rightharpoonup v$ in $E$, then
\begin{align}
	\begin{split}
&v_{n}\to v,\ \ \ \ in\ L^{q}(\mathbb{R}^{3}),\ 2\le q< 6,\\&v_{n}\to v,\ \ \ \ a.e.\ in\ \mathbb{R}^{3}.
    \end{split}
\end{align}
What's more, we deduce
\begin{align}\label{21777}
	\begin{split}
&v^{+}_{n}\rightharpoonup v^{+},\ \ \ \ in\ E,\\
&v^{+}_{n}\to v^{+},\ \ \ \ in\ L^{q}(\mathbb{R}^{3}),\ 2\le q< 6,\\
&v^{+}_{n}\to v^{+},\ \ \ \ a.e.\ in\ \mathbb{R}^{3}.
    \end{split}
\end{align}
Divide both sided of \eqref{AS} by $\left\|u_{n}\right\|^{2}$, we obtain
\begin{align}\label{217}
    \begin{split}
    o_{n}(1)&\ge\frac{1}{4}-\frac{\lambda\mu}{4}\int_{\mathbb{R}^{3}}|v^{+}_{n}|^{2}dx
    \\&=\frac{1}{4}-\frac{\lambda\mu}{4}\int_{\mathbb{R}^{3}}|v^{+}|^{2}dx+o(1).
    \end{split}
\end{align}
we can deduce $v^{+}\neq 0$. Due to $u^{+}_{n}=v^{+}_{n}\left\|u_{n}\right\|\to +\infty,$ \eqref{qruation 5}, \eqref{Psc} and Lemma \ref{L 1}, we get
\begin{align}\label{218}
    \begin{split}
    &\frac{J'_{\lambda,T}(u_{n})u_{n}}{\left\|u_{n}\right\|^{4}}\\
    &\ \ =\frac{\left\|u_{n}\right\|^{2}}{\left\|u_{n}\right\|^{4}}
    -\frac{\int_{\mathbb{R}^{3}}2\omega\phi_{u_{n}}u_{n}^{2}dx}{\left\|u_{n}\right\|^{4}}
    -\frac{\int_{\mathbb{R}^{3}}\phi_{u_{n}}^{2}u_{n}^{2}dx}{\left\|u_{n}\right\|^{4}}
    -\frac{\int_{\mathbb{R}^{3}}\lambda h_{T}(u_{n})u_{n}dx}{\left\|u_{n}\right\|^{4}}
    -\frac{\int_{\mathbb{R}^{3}}|u_{n}|^{6}dx}{\left\|u_{n}\right\|^{4}}\\
    &\ \ \le o_{n}(1)
    +\frac{\int_{\mathbb{R}^{3}}2\omega|\phi_{u_{n}}|u_{n}^{2}dx}{\left\|u_{n}\right\|^{4}}
    -\int_{\mathbb{R}^{3}}\frac{\lambda h_{T}(u^{+}_{n})u_{n}^{+}(v_{n}^{+})^{4}}{(u_{n}^{+})^{4}}dx.
    \end{split}
\end{align}
From Lemma \ref{L 1} (iii), \eqref{21777} and $(h_{2})$, we know that $\frac{\int_{\mathbb{R}^{3}}2\omega|\phi_{u_{n}}|u_{n}^{2}dx}{\left\|u_{n}\right\|^{4}}\to 2\omega$ and $\int_{\mathbb{R}^{3}}\frac{\lambda h_{T}(u^{+}_{n})u_{n}^{+}(v_{n}^{+})^{4}}{(u_{n}^{+})^{4}}dx\to +\infty$. Taking the limit of \eqref{218} we get $0\le -\infty$ which have a contradiction. Therefore, ${u_{n}}$ is bounded in $E$.
\end{proof}
\begin{lemma}\label{FG}
If $u_{n}$ is bounded in $E$ then, up to subsequence, $\phi_{u_{n}}\to\phi_{u}$ in $D$.
\end{lemma}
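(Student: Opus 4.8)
The plan is to exploit the monotonicity of the operator $-\Delta-\beta\Delta_4$ together with the a priori bounds from Lemma \ref{L 1}. Since $\{u_n\}$ is bounded in $E$, after passing to a subsequence I may assume $u_n\rightharpoonup u$ in $E$ and, by the compact embedding $E\hookrightarrow L^q(\mathbb{R}^3)$ for $q\in[2,6)$, that $u_n\to u$ in $L^{12/5}(\mathbb{R}^3)$; here $\phi_u$ denotes the function associated with the limit $u$ through Lemma \ref{L 1}(i). By Lemma \ref{L 1}(iii), $\|\phi_{u_n}\|_D\le C\|u_n\|^2\le C$, so $\{\phi_{u_n}\}$ is bounded in $D$, hence also in $D^{1,2}\hookrightarrow L^6$ and in $L^\infty$. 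Writing $w_n:=\phi_{u_n}-\phi_u\in D$, the goal is to show $\|w_n\|_D\to0$.

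First I would write the weak formulation of the second equation solved by $\phi_{u_n}$ (source $u_n$) and by $\phi_u$ (source $u$), namely
\begin{equation*}
\int_{\mathbb{R}^3}\nabla\phi\cdot\nabla v\,dx+\beta\int_{\mathbb{R}^3}|\nabla\phi|^2\nabla\phi\cdot\nabla v\,dx=-4\pi\int_{\mathbb{R}^3}(\omega+\phi)u^2 v\,dx,
\end{equation*}
test both with $v=w_n$, and subtract. On the left-hand side the quadratic part yields exactly $\|\nabla w_n\|_2^2$, while for the quartic part I would invoke the standard vector inequality $(|a|^2a-|b|^2b)\cdot(a-b)\ge \tfrac14|a-b|^4\ge0$, so that
\begin{equation*}
\int_{\mathbb{R}^3}\big(|\nabla\phi_{u_n}|^2\nabla\phi_{u_n}-|\nabla\phi_u|^2\nabla\phi_u\big)\cdot\nabla w_n\,dx\ \ge\ \tfrac14\int_{\mathbb{R}^3}|\nabla w_n|^4\,dx\ \ge\ 0.
\end{equation*}
For the right-hand side I would use the identity
\begin{equation*}
(\omega+\phi_{u_n})u_n^2-(\omega+\phi_u)u^2=(\omega+\phi_{u_n})(u_n^2-u^2)+w_n\,u^2,
\end{equation*}
whose second contribution produces the term $-4\pi\int_{\mathbb{R}^3}u^2w_n^2\,dx$. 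The decisive observation is that this term is nonpositive, so it may be transferred to the left-hand side as an extra nonnegative quantity.

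Collecting everything then gives
\begin{equation*}
0\ \le\ \|\nabla w_n\|_2^2+\tfrac{\beta}{4}\int_{\mathbb{R}^3}|\nabla w_n|^4\,dx+4\pi\int_{\mathbb{R}^3}u^2 w_n^2\,dx\ \le\ 4\pi\int_{\mathbb{R}^3}|\omega+\phi_{u_n}|\,|u_n^2-u^2|\,|w_n|\,dx,
\end{equation*}
and it remains to show the last integral tends to $0$. Using $\|\phi_{u_n}\|_\infty\le C$ (from $D\hookrightarrow L^\infty$), Hölder's inequality with exponents $6/5$ and $6$, the boundedness of $|w_n|_6\le C\|w_n\|_{D^{1,2}}$, and $|u_n^2-u^2|_{6/5}\le|u_n-u|_{12/5}\,|u_n+u|_{12/5}\to0$, the right-hand side converges to $0$. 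Consequently every nonnegative summand on the left vanishes in the limit; in particular $\|\nabla w_n\|_2\to0$ and $\|\nabla w_n\|_4\to0$, which is precisely $\|w_n\|_D\to0$, that is, $\phi_{u_n}\to\phi_u$ in $D$.

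The step I expect to require the most care is the treatment of the nonlinear $4$-Laplacian term. The vector monotonicity inequality plays a double role: it guarantees the correct sign so the term may be discarded when proving convergence of $\|\nabla w_n\|_2$, and, through its quartic lower bound, it is exactly what upgrades the conclusion from convergence in $D^{1,2}$ to convergence in the full norm of $D$. The favorable sign of $\int_{\mathbb{R}^3}u^2w_n^2\,dx$ is what allows me to avoid first identifying the weak limit of $\{\phi_{u_n}\}$ and passing to the limit in the nonlinear term through a Minty-type monotone operator argument, which would otherwise be the delicate point.
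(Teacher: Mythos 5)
Your proof is correct, and it takes a genuinely cleaner route than the paper's. The paper argues in two stages: it first extracts a weak limit $\phi_{0}$ of $\{\phi_{u_{n}}\}$ in $D$ and identifies $\phi_{0}=\phi_{u}$ by passing to the limit in the weak formulation against $C_{0}^{\infty}$ test functions --- including the nonlinear term $\int_{\mathbb{R}^{3}}|\nabla\phi_{u_{n}}|^{2}\nabla\phi_{u_{n}}\cdot\nabla\varphi\,dx$, a passage that weak convergence in $D$ alone does not justify --- and only then subtracts the two equations, tests with $\phi_{u_{n}}-\phi_{u}$, and applies the same monotonicity inequality you use. You dispense with the identification stage altogether, since $\phi_{u}$ is already well defined by Lemma \ref{L 1}(i), and, writing $w_{n}=\phi_{u_{n}}-\phi_{u}$ as you do, your decomposition $(\omega+\phi_{u_{n}})u_{n}^{2}-(\omega+\phi_{u})u^{2}=(\omega+\phi_{u_{n}})(u_{n}^{2}-u^{2})+u^{2}w_{n}$ is the decisive improvement: it produces the nonnegative term $4\pi\int_{\mathbb{R}^{3}}u^{2}w_{n}^{2}\,dx$, which can be absorbed into the left-hand side, so that the only quantity left to estimate is controlled by $|u_{n}^{2}-u^{2}|_{6/5}\le|u_{n}-u|_{12/5}|u_{n}+u|_{12/5}\to0$. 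By contrast, after the subtraction the paper bounds the right-hand side by $4\pi\int_{\mathbb{R}^{3}}\left(\omega|u_{n}^{2}-u^{2}||w_{n}|+|\phi_{u_{n}}||w_{n}|u_{n}^{2}+|\phi_{u}||w_{n}|u^{2}\right)dx$, and the last two terms there are merely bounded (e.g.\ by $|\phi_{u_{n}}|_{6}|w_{n}|_{6}|u_{n}|_{3}^{2}$) rather than vanishing, so the paper's displayed estimate does not by itself close the argument; your sign-exploiting decomposition is exactly what repairs this. The only ingredients you use beyond the paper's --- the embedding $D\hookrightarrow L^{\infty}$ and the constant $\tfrac{1}{4}$ in the monotonicity inequality for $p=4$ --- are both available, since the embedding is stated in the paper's preliminaries and $2^{2-p}$ is the standard constant, so your argument both simplifies and strengthens the paper's proof.
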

\begin{proof}
Due to $u_{n}$ is bounded in $E$, we know
\begin{align}\label{A}
    \begin{split}
    &u_{n}\rightharpoonup u\ \ \ weakly\ in\ E\\
    &u_{n}\to u\ \ \ in\ L^{q}(\mathbb{R}^{3}),\ \ \ 2\le q< 6.
    \end{split}
\end{align}
From \eqref{4586}, we can easily know $\left\{\phi_{u_{n}}\right\}$ is bounded in $D(\mathbb{R}^{3})$. So, there exists $\phi_{0}\in D$ such that $\phi_{u_{n}}\rightharpoonup\phi_{0}$ in $D$, as a consequence,
\begin{align}\label{B}
    \begin{split}
   &\phi_{u_{n}}\rightharpoonup\phi_{0}\ \ \ weakly\ in\ L^{6}(\mathbb{R}^{3}),\\
   &\phi_{u_{n}}\to\phi_{0}\ \ \ in\ L_{loc}^{q}(\mathbb{R}^{3}),\ \ 1\le q<6.
    \end{split}
\end{align}
Next we will show $\phi_{u}=\phi_{0}$.
By Lemma \eqref{L 1}, it suffices to show that
$$\Delta\phi_{0}+\beta\Delta_{4}\phi_{0}=4\pi(\omega+\phi_{0})u^{2}.$$
Let $\varphi\in C_{0}^{\infty}(\mathbb{R}^{3})$ be a test function. Since $\Delta\phi_{u_{n}}+\beta\Delta_{4}\phi_{u_{n}}=4\pi(\omega+\phi_{u_{n}})u^{2},$ we get
$$-\int_{\mathbb{R}^{3}}\langle \nabla\phi_{u_{n}},\nabla\varphi \rangle dx-\beta
\int_{\mathbb{R}^{3}}\langle |\nabla\phi_{u_{n}}|^{2}\nabla\phi_{u_{n}},\nabla\varphi \rangle dx=\int_{\mathbb{R}^{3}}4\pi\omega u_{n}^{2}\varphi dx+\int_{\mathbb{R}^{3}}4\pi\phi_{u_{n}} u_{n}^{2}\varphi dx.$$
From \eqref{A} and \eqref{B} and the boundedness of $\left\{\phi_{u_{n}}\right\}$ in $D$, the following formulas are all true, namely
$$\int_{\mathbb{R}^{3}}\langle \nabla\phi_{u_{n}},\nabla\varphi \rangle dx\stackrel{n\to\infty}{\longrightarrow}\int_{\mathbb{R}^{3}}\langle \nabla\phi_{0},\nabla\varphi \rangle dx,$$
$$\int_{\mathbb{R}^{3}}\langle |\nabla\phi_{u_{n}}|^{2}\nabla\phi_{u_{n}},\nabla\varphi \rangle dx\stackrel{n\to\infty}{\longrightarrow}\int_{\mathbb{R}^{3}}\langle |\nabla\phi_{0}|^{2}\nabla\phi_{0},\nabla\varphi \rangle dx,$$
$$\int_{\mathbb{R}^{3}} u_{n}^{2}\varphi dx\stackrel{n\to\infty}{\longrightarrow}\int_{\mathbb{R}^{3}}u^{2}\varphi dx,$$
$$\int_{\mathbb{R}^{3}}4\pi\phi_{u_{n}} u_{n}^{2}\varphi dx\stackrel{n\to\infty}{\longrightarrow}\int_{\mathbb{R}^{3}}4\pi\phi_{0}u^{2}\varphi dx,$$
proving that $\phi_{u}=\phi_{0}.$

Since $\phi_{u_{n}}$ and $\phi_{u}$ satisfies the second equation in problem \eqref{problem 1}, let us take the difference between them, we get
\begin{align}
	\begin{split}
    &\int_{\mathbb{R}^{3}}\left\{\nabla(\phi_{u_{n}}-\phi_{u})\nabla v+\beta(|\nabla\phi_{u_{n}}|^{2}\nabla\phi_{u_{n}}
    -|\nabla\phi_{u}|^{2}\nabla\phi_{u})\nabla v\right\}dx\\&
    \ \ =-4\pi\int_{\mathbb{R}^{3}}\left\{\omega(u_{n}^{2}
    -u^{2})v+(\phi_{u_{n}}u_{n}^{2}-\phi_{u}u^{2})v\right\}dx
    \end{split}
\end{align}
for any $v\in D$. Let $v=\phi_{u_{n}}-\phi_{u}$ and using the inequality
$$(|x|^{p-2}x-|y|^{p-2}y)(x-y)\ge c_{p}|x-y|^{p},\ \ \ for\ any\ x,y\in\mathbb{R}^{N},\ p\ge 2$$
the following hold
\begin{align}
	\begin{split}
    &C(\left\|\nabla\phi_{u_{n}}-\nabla\phi_{u}\right\|_{2}^{2}+
    \left\|\nabla\phi_{u_{n}}-\nabla\phi_{u}\right\|_{4}^{4})\\&
    \le 4\pi\int_{\mathbb{R}^{3}}\left(\omega|u_{n}^{2}-u^{2}|
    |\phi_{u_{n}}-\phi_{u}|+|\phi_{u_{n}}||\phi_{u_{n}}-\phi_{u}|u_{n}^{2}
    +|\phi_{u}||\phi_{u_{n}}-\phi_{u}|u^{2}\right)dx.
    \end{split}
\end{align}
By the H\"{o}lder inequality, Sobolev's inequality and \eqref{A}, we can complete the statement.
\end{proof}
\begin{lemma}\label{PS condition}
	$J_{\lambda,T}$ satisfies the $(PS)_{c}$ condition at any level $c\in (0,\frac{1}{3}S^{\frac{3}{2}})$, where $S$ is the best constant of the Sobolev embedding $H^{1}(\mathbb{R}^{3})\hookrightarrow L^{6}(\mathbb{R}^{3})$, i.e.,
$$S=\inf_{u\in D^{1,2}(\mathbb{R}^{3})}\frac{\int_{\mathbb{R}^{3}}|\nabla u|^{2}dx}{(\int_{\mathbb{R}^{3}}|u|^{6}dx)^{\frac{1}{3}}}.$$
\end{lemma}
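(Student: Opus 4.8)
The plan is to show that any Palais--Smale sequence at a level $c<\frac13 S^{3/2}$ is relatively compact by ruling out concentration of the critical power. First I would take a sequence $\{u_n\}\subset E$ with $J_{\lambda,T}(u_n)\to c$ and $J'_{\lambda,T}(u_n)\to 0$. By Lemma \ref{youjie} it is bounded, so up to a subsequence $u_n\rightharpoonup u$ in $E$, $u_n\to u$ in $L^q(\mathbb{R}^{3})$ for $q\in[2,6)$, and $u_n\to u$ a.e.; by Lemma \ref{FG} we also have $\phi_{u_n}\to\phi_u$ in $D$. The purpose of these two facts is that \emph{every} term except the critical power $|u|^4u$ is compact: the nonlocal terms $\int(2\omega+\phi_{u_n})\phi_{u_n}u_n^2$ converge because $\phi_{u_n}\to\phi_u$ in $D$ together with $u_n\to u$ in $L^{12/5}$, while the cut-off terms $\int h_T(u_n)v$ and $\int H_T(u_n)$ converge because $(h_3)$ makes $h_T$ subcritical and the embedding $E\hookrightarrow L^q$ is compact.

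Passing to the limit in $\langle J'_{\lambda,T}(u_n),v\rangle\to 0$ for each $v\in C_0^\infty(\mathbb{R}^{3})$ (weak convergence for the linear part, the compactness just described, and $|u_n|^4u_n\rightharpoonup|u|^4u$ in $L^{6/5}$) shows that $u$ is a critical point of $J_{\lambda,T}$. I would then set $w_n=u_n-u$ and apply Brezis--Lieb in two places: weak convergence in the Hilbert space $E$ gives $\|u_n\|^2=\|u\|^2+\|w_n\|^2+o_n(1)$, while a.e. convergence with $L^6$-boundedness gives $\int_{\mathbb{R}^{3}}|u_n|^6=\int_{\mathbb{R}^{3}}|u|^6+\int_{\mathbb{R}^{3}}|w_n|^6+o_n(1)$. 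Inserting the compactness of all remaining terms into $\langle J'_{\lambda,T}(u_n),u_n\rangle=o_n(1)$ and subtracting $\langle J'_{\lambda,T}(u),u\rangle=0$, everything cancels except the two split quantities, leaving $\|w_n\|^2-\int_{\mathbb{R}^{3}}|w_n|^6=o_n(1)$. Hence $\|w_n\|^2\to\ell$ and $\int_{\mathbb{R}^{3}}|w_n|^6\to\ell$ for some $\ell\ge0$.

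The dichotomy now comes from Sobolev: since $\int_{\mathbb{R}^{3}}|\nabla w_n|^2\le\|w_n\|^2$ and $S\big(\int_{\mathbb{R}^{3}}|w_n|^6\big)^{1/3}\le\int_{\mathbb{R}^{3}}|\nabla w_n|^2$, in the limit $S\ell^{1/3}\le\ell$, so either $\ell=0$ or $\ell\ge S^{3/2}$. To exclude the second alternative I compute the limiting energy. Using the third expression for $J_{\lambda,T}$ in \eqref{277}, the convergence of the nonlocal and cut-off terms, and the two Brezis--Lieb splittings, one gets $c=J_{\lambda,T}(u)+\big(\tfrac12-\tfrac16\big)\ell=J_{\lambda,T}(u)+\tfrac13\ell$. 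It remains to verify $J_{\lambda,T}(u)\ge0$: since $u$ is critical, $J_{\lambda,T}(u)=J_{\lambda,T}(u)-\tfrac14\langle J'_{\lambda,T}(u),u\rangle$ equals
\[
\frac14\|u\|^2+\frac{\beta}{16\pi}\int_{\mathbb{R}^{3}}|\nabla\phi_u|^4dx+\frac14\int_{\mathbb{R}^{3}}\phi_u^2u^2dx+\frac{\lambda}{4}\int_{\mathbb{R}^{3}}\big(h_T(u)u-4H_T(u)\big)dx+\frac1{12}\int_{\mathbb{R}^{3}}|u|^6dx,
\]
and $(h_4)$ together with $|u|_2^2\le C\|u\|^2$ bounds the only possibly negative term below by $-\tfrac{\lambda\mu C}{4}\|u\|^2$, so $J_{\lambda,T}(u)\ge\tfrac14(1-\lambda\mu C)\|u\|^2\ge0$ once $\lambda$ is small (one of the constraints folding into $\lambda_1^*$). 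Then $\ell\ge S^{3/2}$ would force $c\ge\tfrac13S^{3/2}$, contradicting the hypothesis; hence $\ell=0$, i.e. $\|u_n-u\|\to0$.

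I expect the main obstacle to be the concentration step: isolating the critical power from every other (compact, nonlocal, or cut-off) term so that the Brezis--Lieb identities collapse to exactly $\|w_n\|^2-\int_{\mathbb{R}^{3}}|w_n|^6\to0$, and then pinning down the sign $J_{\lambda,T}(u)\ge0$, for which the control $(h_4)$ and the smallness of $\lambda$ are essential. All convergence of the nonlocal terms rests on Lemma \ref{FG}, so that lemma is precisely what lets the reduced functional be treated like a scalar critical problem and the threshold $\tfrac13 S^{3/2}$ to appear.
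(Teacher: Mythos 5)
Your proposal is correct and, structurally, it \emph{is} the paper's proof: boundedness via Lemma \ref{youjie}, the convergences in \eqref{2177} together with Lemma \ref{FG}, Brezis--Lieb splitting of the norm and of the $L^6$ integral, cancellation of the nonlocal and cut-off terms in $\langle J_{\lambda,T}'(u_n),u_n\rangle-\langle J_{\lambda,T}'(u),u\rangle$, the Sobolev dichotomy ($\ell=0$ or $\ell\ge S^{\frac{3}{2}}$), and a contradiction with $c<\frac{1}{3}S^{\frac{3}{2}}$. The genuine difference is the endgame. The paper passes directly from $c=\lim_n J_{\lambda,T}(u_n)$ to the lower bound $\lim_n\left\{\frac{1}{2}\int_{\mathbb{R}^{3}}\left(|\nabla\nu_n|^2+V(x)\nu_n^2\right)dx-\frac{1}{6}\int_{\mathbb{R}^{3}}|\nu_n|^6dx\right\}=\frac{1}{3}b$, which amounts to silently discarding the block of terms carried by the weak limit $u$ (essentially $\frac{1}{2}\|u\|^2-\lambda\int_{\mathbb{R}^{3}}H_T(u)dx-\frac{1}{6}\int_{\mathbb{R}^{3}}|u|^6dx$ plus nonlocal contributions); that block is not obviously nonnegative, so this inequality is exactly the point of the paper's argument that needs justification. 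You instead write $c=J_{\lambda,T}(u)+\frac{1}{3}\ell$ and prove $J_{\lambda,T}(u)\ge0$ by computing $J_{\lambda,T}(u)-\frac{1}{4}\langle J_{\lambda,T}'(u),u\rangle$ and invoking $(h_4)$ together with $V(x)\ge V_0$; this is the honest way to close the argument, but it costs you the restriction $\lambda\mu\le V_0$. That smallness condition appears neither in the statement of this lemma nor in Theorem \ref{T2} (which is claimed for every $\lambda>0$), so strictly speaking you prove a weaker lemma; on the other hand it is harmless for the paper's main result, since the proof of Theorem \ref{theorem 1.1} in any case confines $\lambda$ below a $\lambda_0$ with $\frac{V_0}{2}-\lambda_0\mu>0$. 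In short: same skeleton and same key lemmas, but your treatment of the limit energy is more rigorous than the paper's, at the price of a smallness hypothesis on $\lambda$ that the paper's own argument implicitly needs but never states.
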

\begin{proof}
    Let ${u_{n}}$ be a $PS$ sequence satisfying \eqref{Psc}.
    Form Lemma \ref{youjie} we know that $\left\{u_{n}\right\}$ is bounded in $E$, then up to a subsequence, we get
\begin{align}\label{2177}
	\begin{split}
&u_{n}\rightharpoonup u,\ \ \ \ in\ E,\\
&u_{n}\to u,\ \ \ \ in\ L^{q}(\mathbb{R}^{3}),\ 2\le q< 6,\\
&u_{n}\to u,\ \ \ \ a.e.\ in\ \mathbb{R}^{3}.
    \end{split}
\end{align}
Assume $\nu_{n}=u_{n}-u$, From the Brezis-Lieb lemma in \cite{1996Minimax}, we obtain
\begin{align}
	\begin{split}
    &\int_{\mathbb{R}^{3}}|\nabla u_{n}|^{2}dx=\int_{\mathbb{R}^{3}}|\nabla u|^{2}dx+\int_{\mathbb{R}^{3}}|\nabla \nu_{n}|^{2}dx+o(1),\\
    &\int_{\mathbb{R}^{3}}|u_{n}|^{2}dx=\int_{\mathbb{R}^{3}}| u|^{2}dx+\int_{\mathbb{R}^{3}}| \nu_{n}|^{2}dx+o(1),\\
    &\int_{\mathbb{R}^{3}}|u_{n}|^{6}dx=\int_{\mathbb{R}^{3}}| u|^{6}dx+\int_{\mathbb{R}^{3}}|\nu_{n}|^{6}dx+o(1).
    \end{split}
\end{align}
Due to \cite[Theorem A.1]{1996Minimax}, for any $\varphi\in C_{0}^{\infty}\subset E$. we deduce
$$\int_{\mathbb{R}^{3}}h_{T}(u_{n})\varphi dx\to \int_{\mathbb{R}^{3}}h_{T}(u)\varphi dx.$$
It is easy to know
\begin{align}
	\begin{split}
    \int_{\mathbb{R}^{3}}\left(h_{T}(u_{n})u_{n}-h_{T}(u)u\right)dx&=
    \int_{\mathbb{R}^{3}}(h_{T}(u_{n})-h_{T}(u))u_{n}dx
    +\int_{\mathbb{R}^{3}}h_{T}(u)(u_{n}-u)dx\\
    &\le\int_{\mathbb{R}^{3}}(h_{T}(u_{n})-h_{T}(u))u_{n}dx+
    \left(\int_{\mathbb{R}^{3}}(h_{T}(u))^{2}dx\right)^{\frac{1}{2}}|u_{n}-u|_{L^{2}}.
    \end{split}
\end{align}
From the H\"{o}lder inequality, one have
\begin{align}\label{equation 13}
	\begin{split}
		\left|\int_{\mathbb{R}^{3}}\left( \phi_{u_{n}} u_{n}^{2}- \phi_{u}u^{2}\right)dx \right|&\le\int_{\mathbb{R}^{3}}|\phi_{u_{n}}||u_{n}+u||u_{n}-u|dx+
\int_{\mathbb{R}^{3}}|\phi_{u_{n}}-\phi_{u}||u|^{2}dx\\ &\le|\phi_{u_{n}}|_{L^{6}(\mathbb{R}^{3})}|u_{n}+u|_{L^{2}(\mathbb{R}^{3})}
|u_{n}-u|_{L^{3}(\mathbb{R}^{3})}|\\&\ \ \ +|\phi_{u_{n}}-\phi_{u}|_{L^{6}(\mathbb{R}^{3})}
|u|_{L^{\frac{12}{5}}(\mathbb{R}^{3})}^{2}.
	\end{split}
\end{align}
and
\begin{align}\label{equation 14}
	\begin{split}
		\left|\int_{\mathbb{R}^{3}}\left( \phi_{u_{n}}^{2} u_{n}^{2}-\phi_{u}^{2}u^{2}\right) dx\right|&\le\int_{\mathbb{R}^{3}}|\phi_{u_{n}}|^{2}|u_{n}+u||u_{n}-u|dx+\int_{\mathbb{R}^{3}} |\phi_{u_{n}}+\phi_{u}||\phi_{u_{n}}-\phi_{n}||u|^{2}dx\\		&\le|\phi_{u_{n}}|_{L^{6}(\mathbb{R}^{3})}^{2}|u_{n}+u|_{L^{3}(\mathbb{R}^{3})}
|u_{n}-u|_{L^{3}(\mathbb{R}^{3})}|\\&\ \ \ +|\phi_{u_{n}}-\phi_{u}|_{L^{6}(\mathbb{R}^{3})}
|\phi_{u_{n}}+\phi_{u}|_{L^{6}(\mathbb{R}^{3})}|u|_{L^{3}(\mathbb{R}^{3})}^{2}.
	\end{split}
\end{align}
Combine \eqref{2177}-\eqref{equation 14} with Lemma \eqref{FG}, up to subsequence, we get
\begin{align}
	\begin{split}
    &\langle J_{\lambda,T}'(u_{n}),u_{n} \rangle-\langle J_{\lambda,T}'(u),u \rangle\\&\ \
    =\int_{\mathbb{R}^{3}}\left(|\nabla u_{n}|^{2}-|\nabla u|^{2}\right)dx+ \int_{\mathbb{R}^{3}}V(x)(u_{n}^{2}-u^{2})dx
    -\int_{\mathbb{R}^{3}}2\omega(\phi_{u_{n}}u_{n}^{2}-\phi_{u}u^{2})dx\\&\ \ \ \ \ -\int_{\mathbb{R}^{3}}\left(\phi_{u_{n}}^{2}u_{n}^{2}-\phi_{u}^{2}u^{2}\right)dx
    -\int_{\mathbb{R}^{3}}\lambda(h_{T}(u_{n})u_{n}-h_{T}(u)u)dx
    -\int_{\mathbb{R}^{3}}\left(|u_{n}|^{6}-|u|^{6}\right)dx\\&\ \
    =\int_{\mathbb{R}^{3}}\left| \nabla \nu_{n}\right|^{2}dx+V(x)
    \int_{\mathbb{R}^{3}}\nu_{n}^{2}dx-\int_{\mathbb{R}^{3}}\left| \nu_{n}\right| ^{6}dx+o(1),\ \ as\ n\to\infty.
    \end{split}
\end{align}
It is easy to know that $\langle J_{\lambda,T}'(u_{n}),u_{n}\rangle\to \langle J_{\lambda,T}'(u),u\rangle = 0$, we assume that
$$\int_{\mathbb{R}^{3}}\left| \nabla \nu_{n}\right| ^{2}dx+V(x)\int_{\mathbb{R}^{3}}\nu_{n}^{2}dx\to b ,\ \ \ \int_{\mathbb{R}^{3}}\left| \nu_{n}\right| ^{6}dx\to b,$$
where $b$ is nonnegative constant.

We assert that $b=0$. If $b\ne0$, under the definition of $S$ we get
$$\int_{\mathbb{R}^{3}}\left| \nabla \nu_{n}\right| ^{2}dx\ge S\left(\int_{\mathbb{R}^{3}}|\nu_{n}|^{2^{*}}dx\right)^{\frac{1}{3}}.$$
Then
$$\int_{\mathbb{R}^{3}}\left| \nabla \nu_{n}\right| ^{2}dx+V(x)\int_{\mathbb{R}^{3}}\nu_{n}^{2}dx\ge S\left(\int_{\mathbb{R}^{3}}|\nu_{n}|^{2^{*}}dx\right)^{\frac{1}{3}}.$$
which means $b\ge Sb^{\frac{1}{3}}$. Thus $b\ge S^{\frac{3}{2}}$.

As discussed above and follows from $\phi_{u}\le 0$ and $b\ge S^{\frac{3}{2}}$, we can know
\begin{equation*}
	\begin{split}
		c&=\lim_{n\to\infty}J_{\lambda,T}(u_{n})\\
		&\ge\lim_{n\to\infty}\left\{\frac{1}{2}\int_{\mathbb{R}^{3}}\left( \left| \nabla\nu_{n}\right| ^{2}+V(x)\nu_{n}^{2}\right) dx-\frac{1}{6}\int_{\mathbb{R}^{3}}|\nu_{n}|^{6}dx\right\}\\
		&=\frac{1}{3}b\ge\frac{1}{3}S^{\frac{3}{2}}.	
	\end{split}
\end{equation*}
Which have a contradition. Hence $b=0$. Thus
\begin{equation*}
	\begin{split}
		0&\le \left\|\nu_{n}\right\|=\left[\int_{\mathbb{R}^{3}}\left( \left| \nabla\nu_{n}\right| ^{2}+V(x)\nu_{n}^{2}\right) dx\right]^{\frac{1}{2}}\to 0.
	\end{split}
\end{equation*}
\end{proof}
\begin{lemma}\label{}
    $c_{\lambda,T}<\frac{1}{3}S^{\frac{3}{2}},$ where $c_{\lambda,T}$ and $S$ are respectively defined in \eqref{Psc} and Lemma \ref{PS condition}.
\end{lemma}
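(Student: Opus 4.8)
The plan is to bound the mountain--pass level from above by testing $J_{\lambda,T}$ along the standard Aubin--Talenti instanton direction and checking that the resulting supremum stays strictly below $\tfrac13 S^{3/2}$. Let $\eta\in C_0^\infty(\mathbb{R}^3)$ be a cut-off equal to $1$ near $0$, put $U_\varepsilon(x)=(3\varepsilon)^{1/4}(\varepsilon+|x|^2)^{-1/2}$ (the family realizing $S$) and $u_\varepsilon=\eta U_\varepsilon$. I would use the classical $\mathbb{R}^3$ asymptotics $\int|\nabla u_\varepsilon|^2=S^{3/2}+O(\varepsilon^{1/2})$, $\int u_\varepsilon^6=S^{3/2}+O(\varepsilon^{3/2})$, $\int u_\varepsilon^2=O(\varepsilon^{1/2})$ and $\int u_\varepsilon^q\sim\varepsilon^{(6-q)/4}$ for $q\in(3,6)$. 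Since $J_{\lambda,T}(tu_\varepsilon)\to-\infty$ as $t\to+\infty$, the ray $t\mapsto tu_\varepsilon$ gives (after reparametrisation) an admissible path in $\Gamma$, so $c_{\lambda,T}\le\sup_{t\ge0}J_{\lambda,T}(tu_\varepsilon)$ and it suffices to estimate this supremum.

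Next I would split $J_{\lambda,T}(tu_\varepsilon)$ into a leading part and perturbations. The leading part $\tilde g(t)=\tfrac{t^2}2A-\tfrac{t^6}6B$, with $A=\int|\nabla u_\varepsilon|^2+\int Vu_\varepsilon^2$ and $B=\int u_\varepsilon^6$, has maximum $\tfrac13A^{3/2}B^{-1/2}=\tfrac13S^{3/2}+O(\varepsilon^{1/2})$, the potential contributing only $\int Vu_\varepsilon^2=O(\varepsilon^{1/2})$ since $V$ is bounded on $\mathrm{supp}\,u_\varepsilon$. The two positive coupling terms are negligible: by Lemma~\ref{L 1}(iii),
$$\tfrac{\omega}{2}\int_{\mathbb{R}^3}|\phi_{tu_\varepsilon}|(tu_\varepsilon)^2\,dx+\tfrac{\beta}{16\pi}\int_{\mathbb{R}^3}|\nabla\phi_{tu_\varepsilon}|^4\,dx\le Ct^4\|u_\varepsilon\|_{12/5}^4=O(t^4\varepsilon).$$
The decisive gain comes from the nonlinearity: since $h_T(s)=C_Ts^{p-1}$ for $s>T$ with $4<p<6$, one has $H_T(s)\ge c\,s^p-C$ for all $s\ge0$, so that $-\lambda\int H_T(tu_\varepsilon)\,dx\le-\lambda c\,t^p\int u_\varepsilon^p\,dx+(\text{l.o.t.})\sim-\lambda c\,t^p\,\varepsilon^{(6-p)/4}$.

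Collecting the estimates on the compact $t$-interval where the supremum is attained --- the maximiser of $\tilde g$ tends to $1$ and the perturbations are small, so $t_\varepsilon$ stays in a fixed $[a,b]\subset(0,\infty)$ --- I obtain
$$\sup_{t\ge0}J_{\lambda,T}(tu_\varepsilon)\le\tfrac13S^{3/2}+C_1\varepsilon^{1/2}-\lambda C_2\,\varepsilon^{(6-p)/4}+o(\varepsilon^{1/2}).$$
Because $4<p<6$ forces $(6-p)/4<\tfrac12$, the negative term dominates the Sobolev defect for $\varepsilon$ small, so the right-hand side is $<\tfrac13S^{3/2}$; this holds for every $\lambda>0$.

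The main obstacle is precisely the criticality of dimension three: the Sobolev defect $\int|\nabla u_\varepsilon|^2-S^{3/2}$ and the localized masses $\int u_\varepsilon^2,\ \int u_\varepsilon^4$ are all of the same order $\varepsilon^{1/2}$, so the crude lower bound $H_T(s)\ge Ms^4-C_Ms^2$ from \eqref{213} does not close the argument --- enlarging $M$ to beat the $\varepsilon^{1/2}$ defect simultaneously inflates $C_M\int u_\varepsilon^2$ at the same order. The resolution is to use that $H_T$ grows strictly faster than $s^4$, i.e. the $p$-growth with $p>4$, so that $\int u_\varepsilon^p\sim\varepsilon^{(6-p)/4}$ sits at a strictly lower order than $\varepsilon^{1/2}$ and dominates unconditionally. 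The remaining care is routine: confining $t_\varepsilon$ to a compact interval, and checking that the constant term of the bound $H_T(s)\ge cs^p-C$ (supported where $tu_\varepsilon\ge T$, a ball of measure $O(\varepsilon^{3/4})$) together with the small-$u_\varepsilon$ contribution controlled by \eqref{295} are both $o(\varepsilon^{1/2})$.
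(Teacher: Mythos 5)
Your proof is correct, and at the decisive step it takes a genuinely different route from the paper's --- one that is in fact more robust. Both arguments share the same skeleton: test $J_{\lambda,T}$ along a truncated Talenti bubble, confine the maximizing parameter to a compact interval of $(0,\infty)$, and dispose of the potential term and the two coupling terms via Lemma \ref{L 1} as perturbations of the order of the Sobolev defect or smaller (note your $\varepsilon$ is the square of the paper's, so your $O(\varepsilon^{1/2})$ is their $O(\varepsilon)$). The difference is how $H_T$ is used to beat that defect. The paper uses only the quartic bound \eqref{213}, $H_T(t)\ge Mt^{4}-C_{M}t^{2}$, arrives at
\begin{equation*}
J_{\lambda,T}(t_{\varepsilon}u_{\varepsilon})\le \tfrac{1}{3}S^{3/2}+C\,O(\varepsilon)-\lambda M C\,O(\varepsilon),
\end{equation*}
and concludes by ``taking $M$ large''. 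You instead exploit the explicit tail $h_{T}(s)=C_{T}s^{p-1}$ for $s>T$, $4<p<6$, to get $H_{T}(s)\ge c\,s^{p}-C$, which produces a negative contribution of order $\varepsilon^{(6-p)/4}$ (in the paper's normalization, $\varepsilon^{(6-p)/2}$), of strictly lower order than every error term; hence the level drops below $\tfrac13 S^{3/2}$ for every $\lambda>0$, with no tuning of constants.

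What your route buys is precisely the repair of the obstacle you diagnosed, which the paper's own argument does not escape. In the paper's final display the constant $C$ in the positive term absorbs $\lambda C_{M}$ (it comes from $\lambda C_{M}t_{\varepsilon}^{2}|u_{\varepsilon}|_{2}^{2}$), and because $H_{T}$ has $p$-growth with $p>4$, the optimal constant in \eqref{213} grows superlinearly, $C_{M}\ge c\,M^{(p-2)/(p-4)}-C$; so enlarging $M$ inflates the positive $O(\varepsilon)$ term faster than it strengthens the negative one $\lambda M C|u_{\varepsilon}|_{4}^{4}$, and for small $\lambda$ --- exactly the regime of Theorem \ref{theorem 1.1} --- no choice of $M$ makes the coefficient of $\varepsilon$ negative. (The paper's closing sentence, that $C\,O(\varepsilon)-\lambda MC\,O(\varepsilon)\to-\infty$ as $\varepsilon\to0$, is also not meaningful as stated.) Two minor points on your write-up: the additive constant in $H_{T}(s)\ge cs^{p}-C$ must indeed be integrated only over $\{tu_{\varepsilon}\ge T\}$, as you do, since over the whole support it would contribute $O(1)$; and, exactly like the paper, you tacitly replace the fixed endpoint $e$ in the definition of $\Gamma$ by a point on the ray through $u_{\varepsilon}$ --- a standard abuse shared by both arguments, harmless here because $e$ may be fixed on that ray from the outset without affecting any of the preceding lemmas.
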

\begin{proof}
Let $\varphi\in C_{0}^{\infty}$ is a cut-off function satisfying that there exists $R>0$ such taht $\varphi\mid_{B_{R}}=1$, $0 \le\varphi\le 1$ in $B_{2R}$ and $\text{supp}\varphi\subset B_{2R}$. Let $\varepsilon >0$ and define $u_{\varepsilon}:=w_{\varepsilon}\varphi$ where $w_{\varepsilon}\in D^{1,2}(\mathbb{R}^{3})$ is the Talenti function
$w_{\varepsilon}(x)=\frac{(3\varepsilon^{2})^{\frac{1}{4}}}{(\varepsilon^{2}+|x|^{2})^{\frac{1}{2}}}.$
From estimates obtained in \cite{1996Minimax} we get if $\varepsilon$ is small enough,
\begin{align}\label{one}
	\int_{\mathbb{R}^{3}}|\nabla u_{\varepsilon}|^{2}dx=S^{\frac{3}{2}}+O(\varepsilon),
\end{align}
\begin{align}\label{two}
	\int_{\mathbb{R}^{3}}| u_{\varepsilon}|^{6}dx=S^{\frac{3}{2}}+O(\varepsilon^{3}),
\end{align}
\begin{eqnarray}\label{four}
\int_{\mathbb{R}^{3}}|u_{\varepsilon}|^{q}dx
=\left\{   \begin{array}{ll}
	O(\varepsilon^{\frac{q}{2}}),\ &q\in[2,3), \\
    O(\varepsilon^{\frac{q}{2}}|\ln\varepsilon|),\ & q=3,\\
    O(\varepsilon^{\frac{6-q}{2}}),\ & q\in(3,6).
    \end{array}
\right.	
\end{eqnarray}
Since for any $\varepsilon >0$, $\lim_{t\to\infty}J_{\lambda,T}(tu_{\varepsilon})=-\infty$. We can assume there exists $t_{\varepsilon}\ge 0$ such that $\sup_{t\ge 0}J_{\lambda,T}(tu_{\varepsilon})=J_{\lambda,T}(t_{\varepsilon}u_{\varepsilon})$
and without loss of generality we let $t_{\varepsilon}\ge C_{0}>0.$ In fact, suppose there exists a sequence ${\varepsilon_{n}}\subset\mathbb{R}^{+}$ such that $\lim_{n\to\infty}t_{\varepsilon_{n}}=0$ and $J_{\lambda,T}(t_{\varepsilon_{n}}u_{\varepsilon_{n}})=\sup_{t\ge 0}J_{\lambda,T}(tu_{\varepsilon}).$ We can deduce $0<\alpha<c_{\lambda,T}\le\lim_{n\to\infty}J_{\lambda,T}(t_{\varepsilon_{n}}u_{\varepsilon_{n}})=0$
which have a contradiction.

Moreover, we claim that $\left\{t_{\varepsilon}\right\}_{\varepsilon>0}$ is bounded from above. Otherwise, there exists a subsequence ${t_{\varepsilon_{n}}}$ such that $\lim_{n\to\infty}t_{\varepsilon_{n}}=+\infty.$ From \eqref{277}, \eqref{213}, \eqref{one}-\eqref{four} and Lemma \eqref{L 1} we get
\begin{align*}
\begin{split}
0&<c_{\lambda,T}\\&
\le J_{\lambda,T}(t_{\varepsilon_{n}}u_{\varepsilon_{n}})\\&
\le \frac{t_{\varepsilon_{n}}^{2}}{2}\int_{\mathbb{R}^{3}}\left(|\nabla u_{\varepsilon_{n}}|^{2}+V(x)u_{\varepsilon_{n}}^{2}\right)dx
-t_{\varepsilon_{n}}^{2}
\int_{\mathbb{R}^{3}}\omega\phi_{t_{\varepsilon_{n}}}u_{\varepsilon_{n}}^{2}dx\\&\ \ \ -\int_{\mathbb{R}^{3}}\lambda H_{T}(t_{\varepsilon_{n}}u_{\varepsilon_{n}})dx-\frac{t_{\varepsilon_{n}}^{6}}{6}
\int_{\mathbb{R}^{3}}|u_{\varepsilon_{n}}|^{6}dx\\
&\le \frac{t_{\varepsilon_{n}}^{2}}{2}\int_{\mathbb{R}^{3}}\left(|\nabla u_{\varepsilon_{n}}|^{2}+V(x)u_{\varepsilon_{n}}^{2}\right)dx
-t_{\varepsilon_{n}}^{2}\int_{\mathbb{R}^{3}}\omega\phi_{t_{\varepsilon_{n}}}
u_{\varepsilon_{n}}^{2}dx\\&\ \ \ +t_{\varepsilon_{n}}^{2}\int_{\mathbb{R}^{3}}C_{M}u_{\varepsilon_{n}}^{2}dx
-t_{\varepsilon_{n}}^{4}\int_{\mathbb{R}^{3}}\lambda Mu_{\varepsilon_{n}}^{4}dx
-\frac{t_{\varepsilon_{n}}^{6}}{6}
\int_{\mathbb{R}^{3}}|u_{\varepsilon_{n}}|^{6}dx
\\&\le C_{1}t_{\varepsilon_{n}}^{2}-C_{2}t_{\varepsilon_{n}}^{4}-C_{3}t_{\varepsilon_{n}}^{6}\to -\infty,\ \ \ as\ n\to\infty.
\end{split}
\end{align*}
Therefore $0<-\infty$ is a contradiction.

Let
$$\varrho(t)=\frac{t^{2}}{2}\int_{\mathbb{R}^{3}}|\nabla u_{\varepsilon}|^{2}dx-\frac{t^{6}}{6}\int_{\mathbb{R}^{3}}|u_{\varepsilon}|^{6}dx.$$
It is easy to know
\begin{align}
\sup_{t\ge 0}\varrho(t)=\frac{1}{3}S^{\frac{3}{2}}+O(\varepsilon).
\end{align}
According to assumption $(V_{2})$, for $|x|<r$ there exists $\xi>0$ such that
\begin{align}\label{2288}
|V(x)|\le\xi.
\end{align}
Form \eqref{213} and \eqref{one}-\eqref{2288} we obtain
\begin{align*}
    \begin{split}
    J_{\lambda,T}(t_{\varepsilon}u_{\varepsilon})&=\frac{t_{\varepsilon}^{2}}{2}
    \int_{\mathbb{R}^{3}}|\nabla u_{\varepsilon}|^{2}dx+\frac{t_{\varepsilon}^{2}}{2}
    \int_{\mathbb{R}^{3}}V(x)u_{\varepsilon}^{2}dx-\frac{3t_{\varepsilon}^{2}}{4}
    \int_{\mathbb{R}^{3}}\omega\phi_{t_{\varepsilon}u_{\varepsilon}}u_{\varepsilon}^{2}dx
    -\frac{t_{\varepsilon}^{2}}{4}\int_{\mathbb{R}^{3}}\phi_{t_{\varepsilon}u_{\varepsilon}}^{2}
    u_{\varepsilon}^{2}dx\\
    &\ \ \ -\frac{1}{16\pi}\int_{\mathbb{R}^{3}}|\nabla \phi_{t_{\varepsilon}u_{\varepsilon}}|^{2}dx-\int_{\mathbb{R}^{3}}\lambda H_{T}(t_{\varepsilon}u_{\varepsilon})dx
    -\frac{t_{\varepsilon}^{6}}{6}\int_{\mathbb{R}^{3}}|u_{\varepsilon}|^{6}dx\\
    &\le \frac{t_{\varepsilon}^{2}}{2}\int_{\mathbb{R}^{3}}|\nabla u_{\varepsilon}|^{2}dx+\frac{t_{\varepsilon}^{2}}{2}
    \int_{\mathbb{R}^{3}}V(x)u_{\varepsilon}^{2}dx-\frac{3t_{\varepsilon}^{2}}{4}
    \int_{\mathbb{R}^{3}}\omega\phi_{t_{\varepsilon}u_{\varepsilon}}u_{\varepsilon}^{2}dx
    -\lambda Mt_{\varepsilon}^{4}\int_{\mathbb{R}^{3}}u_{\varepsilon}^{4}dx\\
    &\ \ \ +\lambda C_{M}t_{\varepsilon}^{2}\int_{\mathbb{R}^{3}}u_{\varepsilon}^{2}dx
    -\frac{t_{\varepsilon}^{6}}{6}\int_{\mathbb{R}^{3}}|u_{\varepsilon}|^{6}dx\\
    &\le \sup_{t\ge0}\varrho(t)+\frac{t_{\varepsilon}^{2}}{2}\xi\int_{\mathbb{R}^{3}}u_{\varepsilon}^{2}dx
    -\frac{3t_{\varepsilon}^{2}}{4}
    \int_{\mathbb{R}^{3}}\omega\phi_{t_{\varepsilon}u_{\varepsilon}}u_{\varepsilon}^{2}dx
    -\lambda Mt_{\varepsilon}^{4}\int_{\mathbb{R}^{3}}u_{\varepsilon}^{4}dx
    +\lambda C_{M}t_{\varepsilon}^{2}\int_{\mathbb{R}^{3}}u_{\varepsilon}^{2}dx\\
    &\le\sup_{t\ge 0}\varrho(t)+C|u_{\varepsilon}|_{2}^{2}+C|u_{\varepsilon}|_{\frac{12}{5}}^{2}-\lambda MC|u_{\varepsilon}|_{4}^{4}\\
    &\le \frac{1}{3}S^{\frac{3}{2}}+CO(\varepsilon)-\lambda MCO(\varepsilon).
    \end{split}
\end{align*}
When $M$ large enough, we know $CO(\varepsilon)-\lambda MCO(\varepsilon)\to-\infty$ for small enough $\varepsilon>0.$
\end{proof}
\begin{theorem}\label{T2}
Assume $\lambda>0\ T>0$, problem \eqref{new problem} has a nontrivial solution $u_{\lambda,T}$ with $J_{\lambda,T}(u_{\lambda,T})=c_{\lambda,T}.$
\end{theorem}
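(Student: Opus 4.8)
The plan is to obtain $u_{\lambda,T}$ as a mountain pass critical point of $J_{\lambda,T}$, since every ingredient of the Mountain Pass Theorem has already been assembled in the preceding lemmas, so that the proof amounts to a direct synthesis. The mountain pass geometry of $J_{\lambda,T}$ was verified in Lemma \ref{geometric}, and from it the text already extracted a Palais--Smale sequence $\{u_n\}\subset E$ satisfying \eqref{Psc}, i.e. $J_{\lambda,T}(u_n)\to c_{\lambda,T}$ and $J'_{\lambda,T}(u_n)\to 0$, where $c_{\lambda,T}$ is the minimax value defined right after \eqref{Psc}. Thus I would take $\{u_n\}$ as the starting point and argue that it converges, after passing to a subsequence, to the desired solution.

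The crucial step is to place $c_{\lambda,T}$ inside the range on which compactness is available. On the one hand, part (i) of Lemma \ref{geometric} shows that every path $\gamma\in\Gamma$ must cross the sphere $\|u\|=\rho$, on which $J_{\lambda,T}\ge\alpha$; taking the infimum over $\Gamma$ of the maximum along each path gives $c_{\lambda,T}\ge\alpha>0$. On the other hand, the lemma immediately preceding this theorem (whose label is empty, so I reference it verbally) supplies the strict upper estimate $c_{\lambda,T}<\tfrac{1}{3}S^{3/2}$. Consequently $c_{\lambda,T}\in\bigl(0,\tfrac{1}{3}S^{3/2}\bigr)$, which is exactly the interval in which Lemma \ref{PS condition} guarantees that $J_{\lambda,T}$ satisfies the $(PS)_{c_{\lambda,T}}$ condition.

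I would then apply Lemma \ref{PS condition} to the sequence from \eqref{Psc}: up to a subsequence, $u_n\to u_{\lambda,T}$ strongly in $E$. Because $h_T$ is continuous and $\phi_{u_n}\to\phi_{u_{\lambda,T}}$ in $D$ by Lemma \ref{FG}, both $J_{\lambda,T}$ and $J'_{\lambda,T}$ are continuous along the sequence, so passing to the limit in \eqref{Psc} yields $J_{\lambda,T}(u_{\lambda,T})=c_{\lambda,T}$ and $J'_{\lambda,T}(u_{\lambda,T})=0$; that is, $u_{\lambda,T}$ solves \eqref{new problem}. Since $J_{\lambda,T}(0)=0<\alpha\le c_{\lambda,T}=J_{\lambda,T}(u_{\lambda,T})$, the critical point $u_{\lambda,T}$ is nontrivial. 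The positivity demanded in \eqref{new problem} I would recover from the design of the cut-off: testing $J'_{\lambda,T}(u_{\lambda,T})=0$ with the negative part $u_{\lambda,T}^-$, using that $h_T$ vanishes on $(-\infty,0]$ together with $-\omega\le\phi_{u_{\lambda,T}}\le 0$ from Lemma \ref{L 1}(ii) to control the coupling term, forces $u_{\lambda,T}\ge 0$, after which the strong maximum principle upgrades this to $u_{\lambda,T}>0$.

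At the level of this theorem the reasoning is thus a straightforward assembly; the two genuinely delicate points have already been dispatched in the earlier lemmas, namely the strict gap $c_{\lambda,T}<\tfrac{1}{3}S^{3/2}$, which defeats the loss of compactness caused by the critical exponent, and the recovery of strong convergence below that threshold in Lemma \ref{PS condition}. I expect no further obstacle here beyond carefully checking that $c_{\lambda,T}$ lands strictly inside $\bigl(0,\tfrac{1}{3}S^{3/2}\bigr)$, which is where the interplay of the lower bound $\alpha$ and the upper bound from the Talenti-function test is essential.
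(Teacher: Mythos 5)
Your proposal is correct and takes essentially the same route as the paper: its proof is exactly this synthesis of Lemma \ref{geometric}, the threshold estimate $c_{\lambda,T}<\frac{1}{3}S^{\frac{3}{2}}$, and Lemma \ref{PS condition} via the mountain pass theorem, with nontriviality coming from $c_{\lambda,T}\ge\alpha>0=J_{\lambda,T}(0)$. You are in fact more explicit than the paper on two points it glosses over --- that the unnamed lemma is precisely what licenses applying the $(PS)_{c}$ condition at the level $c_{\lambda,T}$, and the positivity requirement in \eqref{new problem}, which the paper never addresses --- though your $u^{-}$-test sketch for the latter is not conclusive as written, since the critical term contributes $-\int_{\mathbb{R}^{3}}|u^{-}|^{6}dx$ with the unfavorable sign and only yields $\|u^{-}\|^{2}\le |u^{-}|_{6}^{6}$.
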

\begin{proof}
Fist of all, we know that the function $J_{\lambda,T}$ satisfies Lemma \ref{geometric}, that is, the geometric structure of the mountain pass. Then the $PS$ sequence can be obtained. Secondly, because of Lemma \ref{PS condition}, it can be know that function $J_{\lambda,T}$ satisfies the $PS$ condition. According to the mountain pass theorem, there exists a critical point $u_{\lambda,T}\in E$. Moreover, $J_{\lambda,T}=c_{\lambda,T}\ge\alpha>0=J(0)$, so that $u_{\lambda,T}$ is a nontrivial solution.
\end{proof}
\section{Proof of the Theorem 1.1}
In this section, we will prove Theorem \ref{theorem 1.1}. First, prove the solution of problem \ref{new problem} satisfied $|u_{\lambda,T}|_{\infty}\le T$, which means that the solution at this time is the solution of problem \ref{problem 1}. The proof method is similar to the document \cite{MR4143620,MR3950621} using the Nash-Moser method.
\begin{lemma}\label{345}
If $u$ is a critical point of $J_{\lambda,T}$, then $u\in L^{\infty}(\mathbb{R}^{3})$ and
$$|u|_{\infty}\le C_{0}^{\frac{1}{2(\zeta-1)}}\zeta^{\frac{\zeta}{(\zeta-1)^{2}}}\left[(\lambda C_{T}^{*}+\alpha(\varepsilon,u))(1+|u|_{2})^{2}+\lambda C_{T}|u|_{6}^{p-2}\right]^{\frac{1}{2(\zeta-1)}}|u|_{6}^{\kappa},$$
where $C_{0}>0$ and $\kappa\le 1$ are constants independent of $\lambda$ and $T$, $\zeta=\frac{8-p}{2}.$
\end{lemma}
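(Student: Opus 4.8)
\emph{Strategy.} The plan is to run a Moser iteration on the weak equation $\langle J_{\lambda,T}'(u),v\rangle=0$ satisfied by the critical point $u$. Two structural facts drive the scheme. First, by Lemma \ref{L 1}(ii) we have $-\omega\le\phi_u\le0$ on $\{u\ne0\}$, so $(2\omega+\phi_u)\phi_u\le0$; whenever $v$ has the same sign as $u$, the coupling term $-\int(2\omega+\phi_u)\phi_u uv\,dx$ is nonnegative and stands on the left-hand side, so it may simply be dropped to obtain the one-sided inequality
\[
\int_{\mathbb{R}^3}\big(\nabla u\cdot\nabla v+V(x)uv\big)\,dx\le \int_{\mathbb{R}^3}\big(\lambda h_T(u)+|u|^4u\big)v\,dx .
\]
Second, by $(h_3)$ the nonlinearity obeys $|\lambda h_T(u)+|u|^4u|\le \lambda C_T^*|u|+\lambda C_T|u|^{p-1}+|u|^5$ pointwise, with $4<p<6$.

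\emph{The iteration step.} For $L>0$ and $s\ge1$ I would test with $v=u\,u_L^{2(s-1)}$, where $u_L:=\min\{|u|,L\}$ keeps $v\in E$, and set $w:=u\,u_L^{s-1}$. A standard computation bounds $\int|\nabla w|^2\,dx$ by a constant multiple of $s^2\int\nabla u\cdot\nabla v\,dx$, and the Sobolev inequality (constant $S$ from Lemma \ref{PS condition}) gives $|w|_6^2\le S^{-1}\int|\nabla w|^2\,dx$. Since $|u|\,v=w^2$, the three right-hand contributions become the zeroth-order term $\lambda C_T^*|w|_2^2$, the subcritical term $\lambda C_T\int|u|^{p-2}w^2\,dx$, and the critical term $\int|u|^4w^2\,dx$. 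For the subcritical term, Hölder's inequality with the conjugate pair $\tfrac{6}{p-2},\tfrac{6}{8-p}$ yields
\[
\int_{\mathbb{R}^3}|u|^{p-2}w^2\,dx\le |u|_6^{\,p-2}\Big(\int_{\mathbb{R}^3}|u|^{\frac{12s}{8-p}}\,dx\Big)^{\frac{8-p}{6}} ,
\]
and since $\tfrac{12s}{8-p}=6s/\zeta$ this is exactly why the integrability exponent advances by the factor $\zeta=\tfrac{8-p}{2}$ (note $1<\zeta<2$): the right-hand norm is $|u|_{6\tilde s}$ at the previous level $\tilde s=s/\zeta$, while the fixed Hölder factor leaves the power $|u|_6^{p-2}$.

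\emph{The critical term.} This is the delicate point. Here $|u|^4\in L^{3/2}(\mathbb{R}^3)$ sits at the borderline $L^{N/2}$ with $N=3$, so Hölder alone gains nothing. I would split $|u|^4=|u|^4\chi_{\{|u|\le K\}}+|u|^4\chi_{\{|u|>K\}}$: the truncated part is $\le K^4$ and is absorbed into the coefficient of $|w|_2^2$ (this, together with the truncated reduction of $\lambda C_T^*|w|_2^2$ to a multiple of $(1+|u|_2)^2$, is the origin of the quantity $\alpha(\varepsilon,u)$ in the statement), whereas $\big||u|^4\chi_{\{|u|>K\}}\big|_{3/2}\to0$ as $K\to\infty$ because $u\in L^6$; choosing $K$ (equivalently $\varepsilon$) large enough makes $S^{-1}s^2\big||u|^4\chi_{\{|u|>K\}}\big|_{3/2}<\tfrac12$, so that copy of $|w|_6^2$ is absorbed on the left. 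After absorption one is left with a closed recursion $|u|_{6\zeta\tilde s}\lesssim (C_0 s^2)^{1/(2s)}\big[(\lambda C_T^*+\alpha(\varepsilon,u))(1+|u|_2)^2+\lambda C_T|u|_6^{p-2}\big]^{1/(2s)}|u|_{6\tilde s}$, with all constants explicit and independent of $\lambda,T$.

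\emph{Conclusion and main obstacle.} Finally I would iterate with $s_n=\zeta^n$, $n\ge0$, letting $L\to\infty$ at each stage by monotone convergence so that $u_L$ is replaced by $|u|$, and then send $n\to\infty$. The geometric sums $\sum_{n\ge1}\zeta^{-n}=\tfrac{1}{\zeta-1}$ and $\sum_{n\ge1}n\,\zeta^{-n}=\tfrac{\zeta}{(\zeta-1)^2}$ turn $\prod_n(C_0 s_n^2)^{1/(2s_n)}$ into the factor $C_0^{1/(2(\zeta-1))}\zeta^{\zeta/(\zeta-1)^2}$ and raise the bracket to the power $\tfrac{1}{2(\zeta-1)}$, while the telescoping powers of the advancing norms collapse to a single $|u|_6^{\kappa}$ with $\kappa\le1$; this yields $u\in L^\infty(\mathbb{R}^3)$ with the stated bound. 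The main obstacle is precisely the critical term: one must verify that a single truncation level $K$ can be chosen uniformly in $s$ so that the absorbed critical coefficient stays below $\tfrac12$ at every step and the product of the step constants remains summable. Once this is secured, the rest is careful bookkeeping of exponents.
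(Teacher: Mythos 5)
Your scheme is, in all essentials, the paper's own proof: the paper tests \eqref{qruation 5} with the truncated powers $u_k$ and $\chi_k$ (your $u\,u_L^{2(s-1)}$ and $u\,u_L^{s-1}$), discards the coupling terms using the sign information of Lemma \ref{L 1}, obtains $\int|\nabla\chi_k|^2\,dx\le s^2\int\nabla u\cdot\nabla u_k\,dx$, treats the subcritical term by H\"older with the same conjugate pair $\bigl(\tfrac{6}{p-2},\tfrac{6}{8-p}\bigr)$, and iterates $s_j=\zeta^j$ with the same two geometric sums. The only presentational difference is the critical term: the paper cites a Br\'ezis--Kato-type lemma to get \eqref{39} with $\varepsilon=\tfrac{1}{2s^2}$, whereas you prove that estimate inline by splitting $|u|^4$ at a height $K$ (so that $\alpha(\varepsilon,u)=K^4$); these are the same mathematics. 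Two bookkeeping remarks: the factor $(1+|u|_2)^2$ does not come from the truncation but from the interpolation \eqref{313}, $|u|_{2s}^{2s}\le(1+|u|_2)^2|u|_{2sq}^{2s\sigma}$, and it is this interpolation that produces the exponents $\kappa_j\in\{\sigma_j,1\}$ and forces the paper's closing case analysis ($|u|_6\ge 1$ versus $|u|_6<1$, the latter needing $\prod_j\kappa_j\ge e^{\theta}$), which your ``telescoping collapse'' glosses over; still, the one-step recursion you state is exactly \eqref{314}.

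The genuine problem is the obstacle you flag, and your proposed repair cannot work as stated. A single truncation level $K$ uniform in $s$ would require $S^{-1}s^2\bigl\||u|^4\chi_{\{|u|>K\}}\bigr\|_{3/2}<\tfrac12$ for all $s=\zeta^n$; for fixed $K$ the left-hand side tends to $+\infty$ unless $\bigl\||u|^4\chi_{\{|u|>K\}}\bigr\|_{3/2}=0$, i.e.\ unless $|u|\le K$ a.e., which is precisely the conclusion being proved. So $K$ must grow along the iteration, the step constant becomes $\alpha_n=K(\zeta^n)^4$, and convergence of the infinite product requires $\sum_n \zeta^{-n}\ln\alpha_n<\infty$, which nothing quantifies, since the decay rate of $\||u|^4\chi_{\{|u|>K\}}\|_{3/2}$ in $K$ is not controlled. (To be fair, the paper's write-up is exposed to the same objection: its $\alpha(\varepsilon,u)$ with $\varepsilon=\tfrac{1}{2s_j^2}$ depends on $j$, yet is treated as a single fixed constant in \eqref{315}.) The standard way to close this is a two-stage argument: perform your splitting once, at the single fixed exponent $s=\zeta$, which legitimately yields $u\in L^{6\zeta}(\mathbb{R}^3)$; then $|u|^4\in L^{3\zeta/2}$ with $3\zeta/2>\tfrac32$, so at every later step H\"older gives $\int|u|^4w^2\,dx\le \||u|^4\|_{3\zeta/2}\,\|w\|_{2r'}^2$ with $2r'<6$, and the interpolation-plus-Young estimate $\|w\|_{2r'}^2\le\delta\|w\|_6^2+C\,\delta^{-\nu}\|w\|_2^2$ (with explicit $\nu=\tfrac{1}{\zeta-1}$), applied with $\delta\sim s^{-2}$, makes every step constant an explicit power of $s$; the infinite product then converges and the rest of your exponent bookkeeping goes through unchanged.
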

\begin{proof}
Assume $A_{k}=\left\{x\in\mathbb{R}^{3}:|u|^{s-1}\le k\right\},\ B_{k}=\mathbb{R}^{3}\setminus A_{k},$ where $s>1,\ k>0$. Let
\begin{eqnarray}
u_{k}=\left\{   \begin{array}{ll}
	u|u|^{2(s-1)},\ &x\in A_{k}, \\
    k^{2}u,\ &x\in B_{k},
    \end{array}
\right.	
\end{eqnarray}
and
\begin{eqnarray}
\chi_{k}=\left\{   \begin{array}{ll}
	u|u|^{s-1},\ &x\in A_{k}, \\
    ku,\ &x\in B_{k}.
    \end{array}
\right.	
\end{eqnarray}
it is easy to know $|u_{k}| \le |u|^{2s-1}$ if $u_{k},\chi_{k}\in E$, and $\chi_{k}^{2}=uu_{k}\le|u|^{2s}.$ Through direct calculation, the following formula can be obtained:
\begin{eqnarray}
\nabla u_{k}=\left\{   \begin{array}{ll}
	(2s-1)|u|^{2s-2}\nabla u,\ &x\in A_{k}, \\
    k^{2}\nabla u,\ &x\in B_{k},
    \end{array}
\right.	
\end{eqnarray}
\begin{eqnarray}
\nabla \chi_{k}=\left\{   \begin{array}{ll}
	s|u|^{s-1}\nabla u,\ &x\in A_{k}, \\
    k\nabla u,\ &x\in B_{k},
    \end{array}
\right.	
\end{eqnarray}
and
\begin{align}\label{23}
\int_{\mathbb{R}^{3}}\left(|\nabla \chi_{k}|^{2}-\nabla u\nabla u_{k}\right)dx
=(s-1)^{2}\int_{A_{k}}|u|^{2(s-1)}|\nabla u|^{2}dx.
\end{align}
Due to its definition, we get
\begin{align}\label{33}
    \begin{split}
&\int_{\mathbb{R}^{3}}\nabla u\nabla u_{k}dx\\
&\ \ \ =(2s-1)\int_{A_{k}}|u|^{2(s-1)}|\nabla u|^{2}dx+k^{2}\int_{B_{k}}|\nabla u|^{2}dx\\
&\ \ \ \ge (2s-1)\int_{A_{k}}|u|^{2(s-1)}|\nabla u|^{2}dx.
    \end{split}
\end{align}
From \eqref{23} and \eqref{33}, we get $\int_{\mathbb{R}^{3}}\nabla u\nabla u_{k}dx\ge 0$ and
\begin{align}\label{37}
\int_{\mathbb{R}^{3}}|\nabla \chi_{k}|^{2}dx\le s^{2}\int_{\mathbb{R}^{3}}\nabla u\nabla u_{k}dx.
\end{align}
Since $u$ is the critical point, Let $u_{k}$ is a test function in \eqref{qruation 5}, we can deduce
\begin{align}
\int_{\mathbb{R}^{3}}\left(\nabla u\nabla u_{k}+V(x)uu_{k}-2\omega\phi_{u}uu_{k}-\phi_{u}^{2}uu_{k}\right)dx
=\int_{\mathbb{R}^{3}}\lambda h_{T}(u)u_{k}dx+\int_{\mathbb{R}^{3}}|u|^{4}uu_{k}dx.
\end{align}
Combine \eqref{37} with Lemma \ref{L 1} ,it is easy to get
$$\int_{\mathbb{R}^{3}}|\nabla \chi_{k}|^{2}dx\le s^{2}\left(\int_{\mathbb{R}^{3}}\lambda h_{T}(u)u_{k}dx+\int_{\mathbb{R}^{3}}|u|^{4}uu_{k}dx\right).$$
By a version of the Br$\acute{e}$zis-Kato lemma as done in \cite[Lemma 2.5]{MR3529007}, for any $\varepsilon>0$, we can be find $\alpha(\varepsilon,u)$ such that
$$\int_{\mathbb{R}^{3}}|u|^{4}\chi_{k}^{2}dx\le\varepsilon\int_{\mathbb{R}^{3}}|\nabla \chi_{k}|^{2}dx+\alpha(\varepsilon,u)\int_{\mathbb{R}^{3}}|\chi_{k}|^{2}dx.$$
Let $\varepsilon=\frac{1}{2s^{2}}$, from $\chi_{k}^{2}=uu_{k}$ and $(h_{3})$ we deduce
\begin{align}\label{39}
\int_{\mathbb{R}^{3}}|\nabla \chi_{k}|^{2}dx\le 2s^{2}\left(\int_{\mathbb{R}^{3}}\lambda h_{T}(u)u_{k}dx+\alpha(\varepsilon,u)\int_{\mathbb{R}^{3}}|\chi_{k}|^{2}dx\right),
\end{align}
and
\begin{align}\label{310}
|h_{T}(u)u_{k}|\le C_{T}^{*}\chi_{k}^{2}+C_{T}|u|^{p-2}\chi_{k}^{2}.
\end{align}
From the Sobolev embedding theorem, H\"{o}lder inequality, and
\eqref{39}-\eqref{310}, we get
\begin{align}\label{311}
    \begin{split}
    \left(\int_{A_{k}}|\chi_{k}|^{6}dx\right)^{\frac{1}{3}}&\le S^{-1}\int_{\mathbb{R}^{3}}|\nabla \chi_{k}|^{2}dx\\
    &\le S^{-1}2s^{2}\left[\int_{\mathbb{R}^{3}}\lambda\left( C_{T}^{*}\chi_{k}^{2}+C_{T}|u|^{p-2}\chi_{k}^{2}\right)dx
    +\alpha(\varepsilon,u)\int_{\mathbb{R}^{3}}|\chi_{k}|^{2}dx\right]\\
    &\le S^{-1}2s^{2}\left[(\lambda C_{T}^{*}+\alpha(\varepsilon,u))|\chi_{k}|_{2}^{2}+\lambda C_{T}|u|_{6}^{p-2}|\chi_{k}|_{2q}^{2}\right],
    \end{split}
\end{align}
where $q=\frac{6}{8-p}\in (\frac{3}{2},3)$ and $S$ defined in Lemma \ref{PS condition}. Due to $|\chi_{k}|\le |u|^{s}$ and $|\chi_{k}|=|u|^{s}$ for $x\in A_{k}$, together with \eqref{311}, we easily to get
\begin{align}\label{312}
\left(\int_{A_{k}}|u|^{6s}dx\right)^{\frac{1}{3}}\le S^{-1}2s^{2}
\left[\left(\lambda C_{T}^{*}+\alpha(\varepsilon,u)\right)|u|_{2s}^{2s}
+\lambda C_{T}|u|_{6}^{p-2}|u|_{2sq}^{2s}\right].
\end{align}
Through the interpolation inequality, we know $|u|_{2s}\le |u|_{2}^{1-\sigma}|u|_{2qs}^{\sigma}$, where $\sigma\in(0,1)$ and
$\frac{1}{2s}=\frac{1-\sigma}{2}+\frac{\sigma}{2sq}$, so $\sigma=\frac{q(s-1)}{qs-1}$. Moreover since $2s(1-\sigma)=2+\frac{2(1-s)}{qs-1}<2$, we know
\begin{align}\label{313}
|u|_{2s}^{2s}\le|u|_{2}^{2s(1-\sigma)}|u|_{2sq}^{2s\sigma}
\le(1+|u|_{2})^{2}|u|_{2sq}^{2s\sigma}.
\end{align}
When $k\to\infty$, together with \eqref{312}and \eqref{313}, we deduce
\begin{align}\label{314}
    \begin{split}
    |u|_{6s}&\le(S^{-1}2s^{2})^{\frac{1}{2s}}\left[(\lambda C_{T}^{*}+\alpha(\varepsilon,u))(1+|u|_{2})^{2}|u|_{2sq}^{2s\sigma}
    +\lambda C_{T}|u|_{6}^{p-2}|u|_{2sq}^{2s}\right]^{\frac{1}{2s}}\\
    &\le C_{0}^{\frac{1}{2s}}s^{\frac{1}{s}}\left[(\lambda C^{*}_{T}+\alpha(\varepsilon,u))(1+|u|_{2})^{2}+\lambda C_{T}|u|_{6}^{p-2}\right]^{\frac{1}{2s}}|u|_{2sq}^{\kappa},
    \end{split}
\end{align}
where $\kappa\in\left\{\sigma,1\right\}$, $C_{0}=\max\left\{2S^{-1},1\right\}$. Let $\zeta=\frac{6}{2q},$ then $\zeta\in(1,2)$. Now we use $j$ iterations by letting $s_{j}=\zeta^{j}$ in \eqref{314}, then we get
\begin{align}\label{315}
|u|_{6\zeta^{j}}\le C_{0}^{\frac{1}{2}\sum_{j=1}^{\infty}\frac{1}{\zeta^{j}}}
\zeta^{\sum_{j=1}^{\infty}\frac{j}{\zeta^{j}}}\left[(\lambda C_{T}^{*}+\alpha)(1+|u|_{2})^{2}+\lambda C_{T}|u|_{6}^{p-2}
\right]
^{\frac{1}{2}\sum_{j=1}^{\infty}\frac{1}{\zeta^{j}}}|u|_{6}^
{\kappa_{1}\cdot\cdot\cdot\kappa_{j}},
\end{align}
where $\sigma_{j}=\frac{q(\zeta_{j}-1)}{q\zeta^{j}-1}<1$, $\kappa_{j}\in\left\{\sigma_{j},1\right\}\le 1$. From a easy calculation, we get
$$\sum_{j=1}^{\infty}\frac{1}{\zeta^{j}}=\frac{1}{\zeta-1},\ \ \ \ \
\sum_{j=1}^{\infty}\frac{j}{\zeta^{j}}=\frac{\zeta}{(\zeta-1)^{2}}.$$

The estimates of $|u|_{\infty}$ will be divided into two cases.
\begin{enumerate}[(i)]
    \item When $|u|_{6}\ge 1$, $|u|_{6}^{\kappa_{1}\cdot\cdot\cdot\kappa_{j}}\le |u|_{6}$. If $j\to\infty$ in equation \eqref{315}, we can get
        \begin{align}
        |u|_{\infty}\le C_{0}^{\frac{1}{2(\zeta-1)}}\zeta^{\frac{\zeta}{(\zeta-1)^{2}}}\left[(\lambda C_{T}^{*}+\alpha(\varepsilon,u))(1+|u|_{2})^{2}+\lambda C_{T}|u|_{6}^{p-2}\right]^{\frac{1}{2(\zeta-1)}}|u|_{6}.
        \end{align}
    \item When $|u|_{6}<1$, by $\sigma_{j}=\frac{q(\zeta^{j}-1)}{q\zeta^{j}-1}\ge 1-\frac{1}{\zeta^{j}}$ and $\kappa_{j}\in\left\{\sigma_{j},1\right\}$, for any $j\in\mathbb{N}$, we deduce
        $$0<\sigma_{1}\sigma_{2}\cdot\cdot\cdot\sigma_{j}\le\kappa_{1}
        \kappa_{2}\cdot\cdot\cdot\kappa_{j}.$$
        For $s\in(0,1)$, we claim that $\ln(1-s)\ge -s-\frac{s^{2}}{2(1-s)^{2}}$. Then we can easy to get
        $$\sum_{i=1}^{j}\ln\kappa_{i}\ge\sum_{i=1}^{j}\ln\sigma_{i}\ge -\sum_{i=1}^{j}\frac{1}{\zeta^{i}}
        -\frac{1}{2}\sum_{i=1}^{j}\frac{1}{(\zeta^{i}-1)^{2}}.$$
        From direct calculation, we get
        $$\sum_{i=1}^{j}\frac{1}
        {(\zeta^{i}-1)^{2}}\le\frac{\zeta^{2}}{(\zeta^{2}-1)(\zeta-1)^{2}}.$$
        So
        $$\sum_{i=1}^{\infty}\ln\kappa_{i}\ge-\frac{1}{\zeta-1}
        -\frac{\zeta^{2}}{2(\zeta^{2}-1)(\zeta-1)^{2}}:=\theta$$
        Therefore, $\kappa_{1}\kappa_{2}\cdot\cdot\cdot\kappa_{j}\ge e^{\theta}$ for any $j\in\mathbb{N}$. Due to $|u|_{6}<1$ then $|u|_{6}^{\kappa_{1}\kappa_{2}\cdot\cdot\cdot\kappa_{j}}\le |u|_{6}^{e^{\theta}}$.If $j\to\infty$ in equation \eqref{315}, we can get
        \begin{align}
        |u|_{\infty}\le C_{0}^{\frac{1}{2(\zeta-1)}}\zeta^{\frac{\zeta}{(\zeta-1)^{2}}}\left[(\lambda C_{T}^{*}+\alpha(\varepsilon,u))(1+|u|_{2})^{2}+\lambda C_{T}|u|_{6}^{p-2}\right]^{\frac{1}{2(\zeta-1)}}|u|_{6}^{e^{\theta}}.
        \end{align}
\end{enumerate}
Combining the two cases, we know that the proof is complete if $\kappa=1$ or $\kappa=e^{\theta}\le 1.$
\end{proof}
\begin{proof}[\textbf{Proof of Theorem 1.1}]
Let $u\in C_{0}^{\infty}(\mathbb{R}^{3})$ and $u(x)\le 0$, then from the definition of equation $h_{T}(t)$ in \eqref{a1}, we know $H_{T}(tu)=0$ for any $t>0$. Hence,
\begin{align}
    \begin{split}
    J_{\lambda,T}(tu)=&
    \frac{t^{2}}{2}\int_{\mathbb{R}^{3}}\left(\left | \nabla u\right |^{2}+V(x)u^{2}\right)dx-t^{2}\int_{\mathbb{R}^{3}}\omega\phi_{tu}u^{2}dx-
\frac{t^{2}}{2}\int_{\mathbb{R}^{3}}\phi_{tu}^{2}u^{2}-
\frac{1}{8\pi}\int_{\mathbb{R}^{3}}|\nabla \phi_{tu}|^{2}dx\\
&\ \ \ -\frac{\beta}{16\pi}\int_{\mathbb{R}^{3}}|\nabla \phi_{tu}|^{4}dx-\frac{t^{6}}{6}\int_{\mathbb{R}^{3}}|u|^{6}dx.
    \end{split}
\end{align}
it shows $J_{\lambda,T}(tu)\to-\infty$ for $t\to+\infty$. Then it can be find a $t_{0}>0$ such that $J_{\lambda,T}(t_{0}u)<0.$ we can assume $\gamma(\cdot)=tt_{0}u,t\in [0,1]$, so $\gamma(t)\in\Gamma$. Because $H_{T}(u)=0$, as $t\in[0,1]$, we get
\begin{align}
    \begin{split}
    c_{\lambda,T}&\le\max_{t\in [0,1]}J_{\lambda,T}(\gamma(t))\\
    &\le\max_{t\ge 0}\left\{\frac{t^{2}}{2}\int_{\mathbb{R}^{3}}\left(\left | \nabla u\right |^{2}+V(x)u^{2}\right)dx-t^{2}
    \int_{\mathbb{R}^{3}}\omega\phi_{tu}u^{2}dx
    -\frac{t^{6}}{6}\int_{\mathbb{R}^{3}}|u|^{6}dx.\right\}:=D>0,
    \end{split}
\end{align}
where $D$ is constant independent of $\lambda$ and $T$. From Theorem \ref{T2}, $(h_{4})$ and $(V_{1})-(V_{2})$, we know
\begin{align}\label{320}
    \begin{split}
    4D&\ge 4c_{\lambda,T}\\
    &\ge4J_{\lambda,T}-\langle J'_{\lambda,T}(u_{\lambda,T}),u_{\lambda,T}\rangle\\
    &=\int_{\mathbb{R}^{3}}|\nabla u_{\lambda,T}|^{2}+V(x)u_{\lambda,T}^{2}dx+\frac{\beta}{4\pi}\int_{\mathbb{R}^{3}}
    |\nabla\phi_{u_{\lambda,T}}|^{4}dx+\int_{\mathbb{R}^{3}}\phi_{u_{\lambda,T}}^{2}
    u_{\lambda,T}^{2}dx
    \\&\ \ \ +\int_{\mathbb{R}^{3}}\lambda\left(h_{T}(u_{\lambda,T})
    u_{\lambda,T}-4H_{T}(u_{\lambda,T})\right)dx+\frac{1}{3}\int_{\mathbb{R}^{3}}
    |u_{\lambda,T}|^{6}dx
    \\&\ge
    \int_{\mathbb{R}^{3}}|\nabla u_{\lambda,T}|^{2}+V(x)u_{\lambda,T}^{2}dx-\lambda\mu\int_{\mathbb{R}^{3}}
    u_{\lambda,T}^{2}dx
    \\&\ge\frac{1}{2}
    \left\|u_{\lambda,T}\right\|^{2}+(\frac{V_{0}}{2}-\lambda\mu)|u_{\lambda,T}|_{2}^{2}.
    \end{split}
\end{align}
We can find a $\lambda_{0}$ such that $\frac{V_{0}}{2}-\lambda_{0}\mu>0$. Then from \eqref{320}, $\left\|u_{\lambda,T}\right\|\le 8D.$ Hence, we deduce
$$|u_{\lambda,T}|_{2}\le C_{5},\ \ \ |u_{\lambda,T}|_{6}^{2}\le C_{6},$$
where $C_{5},C_{6}>0$ independent of $\lambda,T.$

From Lemma \eqref{345}, we get
$$|u_{\lambda,T}|_{\infty}\le C_{0}^{\frac{1}{2(\zeta-1)}}\zeta^{\frac{\zeta}{(\zeta-1)^{2}}}\left[(\lambda C_{T}^{*}+\alpha(\varepsilon,u))(1+C_{5})^{2}+\lambda C_{T}C_{6}^{p-2}\right]^{\frac{1}{2(\zeta-1)}}C_{6}^{\kappa}.$$
So, we can choose $T>0$ large enough such that
$$C_{0}^{\frac{1}{2(\zeta-1)}}\zeta^{\frac{\zeta}{(\zeta-1)^{2}}}
\left[\alpha(\varepsilon,u))(1+C_{5})^{2}\right]^{\frac{1}{2(\zeta-1)}}
C_{6}^{\kappa}\le\frac{T}{2}.$$
Since $C_{T}^{*},$ $C_{T}$ are fixed constants for above $T$, we can choose $\lambda_{1}^*<\lambda_{0}$ such that
$$|u_{\lambda,T}|_{\infty}\le C_{0}^{\frac{1}{2(\zeta-1)}}\zeta^{\frac{\zeta}{(\zeta-1)^{2}}}\left[(\lambda_{1}^* C_{T}^{*}+\alpha(\varepsilon,u))(1+C_{5})^{2}+\lambda_{1}^* C_{T}C_{6}^{p-2}\right]^{\frac{1}{2(\zeta-1)}}C_{6}^{\kappa}\le T.$$
Then, for $\lambda\in(0,\lambda_{1}^*)$, we can get $|u_{\lambda,T}|_{\infty}\le T$, $u_{\lambda,T}$ is also a solution for the problem \eqref{problem 1}.
\end{proof}
\section{Proof of the theorem 1.4 }
\begin{proof}
When $\beta=0$ check the proof of theorem 1.1 we can easy to get theorem \ref{corollary 1.2}, hence we omit the details.
\end{proof}

\bibliography{refss}

\end{document}